\documentclass[reqno,11pt]{amsart}
\usepackage{amsmath,amssymb,amscd,amsthm,latexsym,graphics,enumerate,engord,hyperref,txfonts}
\usepackage[all]{xy}
\usepackage{manyfoot}
\DeclareNewFootnote{R}[roman]

\usepackage{amsthm,amsmath}
\newtheorem{theorem}{Theorem}
\newtheorem{proposition}[theorem]{Proposition}
\newtheorem{lemma}[theorem]{Lemme}

\newtheorem{example}[theorem]{Example}
\newtheorem{corollary}[theorem]{Corollary}
\newtheorem{remark}[theorem]{Remark}

\def\cat{{\rm{cat}\hskip1pt}}

\def\TC{{\rm{TC}\hskip1pt}}
\def\MTC{{\rm{MTC}\hskip1pt}}
\def\HTC{{\rm{HTC}\hskip1pt}}

\def\MTC{{\rm{MTC}\hskip1pt}}

\def\cl{{\rm{cl}\hskip1pt}}

\def\zcl{{\rm{zcl}\hskip1pt}}

\begin{document}
	\title{On the higher rational topological complexity of certain elliptic spaces}

\author{Said Hamoun}
\address{My Ismail University of Mekn\`es, Department of Mathematics, B. P. 11 201 Zitoune, Mekn\`es, Morocco.}
\email{s.hamoun@edu.umi.ac.ma}

\keywords{Higher rational topological complexity, $F_0$-spaces, Elliptic spaces}

\subjclass[2010]{
	55M30, 55P62}		
\maketitle
	\begin{abstract} Through this paper, we show that $\TC_r(Z)\leq r\cdot\cat(Z)+\chi_{\pi}(Z)$, for any simply-connected elliptic space $Z$ admitting a pure minimal Sullivan model with a differential of constant length. Here $\chi_{\pi}(Z)$ denotes the homotopy characteristic and $r$ is an integer greater or equals than $2$. We also give a lower bound for $\TC_r$ in the framework of coformal spaces and we compute the exact value of $\TC_r$ for certain families of spaces.
\end{abstract}	
\section*{Introduction}
  Let $Z$ be a topological space and fix an integer $r\geq 2$. The higher topological complexity of $Z$, denoted $\TC_r(Z)$, is a homotopy invariant introduced by Rudyak \cite{RY}. Explicitly, it is defined as the least integer $n$ for which $Z^r=Z\times \cdots\times Z$ can be covered by $n+1$ open subset $U_0,\cdots, U_n$ and over each $U_i$ there exists a local continuous section $s_i: U_i\rightarrow Z^r $ of the evaluation map $$ev_r: Z^{[0,1]}\rightarrow Z^r,\quad \gamma\mapsto \left(\gamma(0), \gamma(\frac{1}{r-1}), \gamma(\frac{2}{r-1}),\cdots,\gamma(\frac{r-2}{r-1}), \gamma(1)\right).$$ Whenever $r=2$ this notion coincides with the notion of topological complexity introduced by M. Farber. In this work all topological spaces are considered to be simply-connected CW-complexes of finite type. Note that the computation of the exact value of $\TC_r(Z)$ is often hard, which motivates the study of algebraic approximations. As a well-known lower bound of $\TC_r(Z)$, we consider the higher rational topological complexity $\TC_r(Z_0)$ where $Z_0$ is a rationalization of $Z$. Besides, there exist several other lower bounds such as $\MTC_r(Z)$ and $\HTC_r(Z)$ which are the higher versions of the invariants $\MTC(Z)$ and $\HTC(Z)$ introduced in \cite{FGKV} and \cite{CKV}.  Through rational homotopy theory, several works have been established on $\TC_r(Z_0)$ and we will use in particular a result of Carrasquel which permits us to characterize $\TC_r(Z_0)$ in terms of Sullivan models. Recall that a Sullivan model of $Z$ is a commutative differential graded algebra (cdga for short) $(\Lambda V,d)$ satisfying $H^*(\Lambda V)=H^*(Z;\mathbb{Q})$. Moreover $(\Lambda V,d)$ is said minimal whenever $dV\subset \Lambda^{\geq 2}V$ and in that case we have $V\cong \pi_*(Z)\otimes \mathbb{Q}$. In particular if $dV\subset \Lambda^{l}V$ for such an integer $l\geq 2$, we say that the differential $d$ is of constant length $l$. Since $(\Lambda V,d)$ completely determines the homotopy type of $Z_0$, we will frequently use the notation $\TC_r(\Lambda V)$ instead of $\TC_r(Z_0)$. For this study we focus our interest on pure spaces, that are spaces for which the minimal model $(\Lambda V,d)$ satisfies $dV^{even}=0$ and $dV^{odd}\subset \Lambda V^{even}$. Our goal is the generalization to the higher topological complexity of results established for the classical rational topological complexity $\TC(Z_0)=\TC_2(Z_0)$\footnote{Usually the rational topological complexity  of $Z$ is denoted by $\TC_0(Z)$ but to avoid confusion we here only use the notation $\TC(Z_0$).} in the context of elliptic spaces \cite{HRV}, \cite{HRV1}. A topological space $Z$ is said elliptic if $\pi_*(Z)\otimes \mathbb{Q}$ and $H^*(Z;\mathbb{Q})$ are both finite dimensional. Besides, we will need as well the notion of homotopy characteristic of $Z$ denoted by $\chi_{\pi}(Z):=\dim \pi_{even}(Z)\otimes \mathbb{Q}-\dim \pi_{odd}(Z)\otimes \mathbb{Q}$ which coincides with $\chi_{\pi}(\Lambda V):=\dim V^{even}-\dim V^{odd}$. We first give an explicit expression of the higher rational topological complexity in terms of rational LS-category for formal spaces. 
 Recall that $Z$ is said formal whenever $(\Lambda V,d) \xrightarrow{\simeq} (H(Z;\mathbb{Q}),0)$ and here we have 
\begin{theorem}
	Let $(\Lambda V,d)$ be a pure elliptic minimal model which is formal. Then
	$$\TC_r(\Lambda V)=r \cdot \cat(\Lambda V)+\chi_{\pi}(\Lambda V).$$
\end{theorem}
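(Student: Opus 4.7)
My plan is to establish the equality by proving matching upper and lower bounds on $\TC_r(\Lambda V)$.

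\textbf{Upper bound.} To obtain $\TC_r(\Lambda V)\leq r\cdot\cat(\Lambda V)+\chi_\pi(\Lambda V)$, I would work directly with the formal representative $(H^*(\Lambda V;\Q),0)$ via Carrasquel's characterization of $\TC_r$ in terms of Sullivan models. Adapting the $\TC_2$ construction of \cite{HRV}, \cite{HRV1}, which builds a relative model of the $2$-fold diagonal with at most $2\cdot\cat(\Lambda V)+\chi_\pi(\Lambda V)+1$ stages, I would generalize slot-by-slot across the $r$ tensor factors: each of the $r-1$ additional factors of the diagonal contributes $\cat(\Lambda V)$ stages, while the $\chi_\pi$ correction, coming from the surplus odd generators, enters only once regardless of $r$.

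\textbf{Lower bound.} For the reverse inequality I would use the rational higher zero-divisor cup length $\zcl_r(H)\leq \TC_r(\Lambda V)$, where $H=H^*(\Lambda V;\Q)$. By Friedlander--Halperin, $\cat(\Lambda V)$ equals the formal dimension $\fd(\Lambda V)$, so $H$ satisfies Poincar\'e duality and carries a non-zero top class $\omega\in H$ of degree $\cat(\Lambda V)$. For each generator $u\in V$ and each slot $s\in\{1,\dots,r\}$, set
\[
\bar u^{(s)} \;:=\; 1^{\otimes(s-1)}\otimes u\otimes 1^{\otimes(r-s)} \;-\; u\otimes 1^{\otimes(r-1)} \;\in\; \ker\mu_r,
\]
where $\mu_r\colon H^{\otimes r}\to H$ denotes iterated multiplication. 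I would then exhibit an explicit product of exactly $r\cdot\cat(\Lambda V)+\chi_\pi(\Lambda V)$ such zero-divisors whose image in $H^{\otimes r}$ is a non-zero multiple of $\omega^{\otimes r}$. The count $r\cdot\cat+\chi_\pi$ reflects the fact that producing a top class in each of the $r$ slots naively requires $r\cdot\cat$ factors, while ellipticity forces $\chi_\pi\leq 0$ and the $|\chi_\pi|=\dim V^{odd}-\dim V^{even}$ odd generators unmatched by even ones permit a saving of exactly $|\chi_\pi|$ factors; this is consistent with the known cases $S^{2k+1}$, where $\cat=1$, $\chi_\pi=-1$ and $\TC_r=r-1$.

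\textbf{Main obstacle.} The hard part will be verifying the non-vanishing of the proposed product in $H^{\otimes r}$. This reduces to a Koszul computation in the pure-formal description of $H$, performed simultaneously across the $r$ tensor factors. The genuinely new difficulty, compared with the classical $F_0$-case ($\chi_\pi=0$), is controlling the interaction between tensor slots and the unmatched odd generators via a careful Poincar\'e-duality dimension count. I expect the sharpness of the lower bound (the vanishing of any product of length $r\cdot\cat+\chi_\pi+1$) to match the sharpness of the upper bound, both being forced by Poincar\'e duality on $H$.
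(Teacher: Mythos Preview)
Your plan takes a direct computational route that, while plausible in spirit, misses the structural shortcut the paper exploits. The paper's proof rests on a result of Amann \cite[Lemma~1.5]{AM}: a pure elliptic minimal model is formal if and only if it splits as $(\Lambda V,d)\cong(\Lambda V',d)\otimes(\Lambda(z_1,\dots,z_l),0)$ with $(\Lambda V',d)$ an $F_0$-model and the $z_i$ odd-degree cocycles. This reduces the theorem to two already-known ingredients: $\zcl_r$ of an $F_0$-model equals $r\cdot\cl_{\mathbb Q}$ (Carrasquel, using that cohomology is concentrated in even degrees), and tensoring with $l$ free odd generators adds exactly $l(r-1)$ to $\zcl_r$ (an elementary lemma). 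Since $\cat(\Lambda V)=\cat(\Lambda V')+l$ and $\chi_\pi(\Lambda V)=-l$, the formula drops out. No explicit non-vanishing of a long zero-divisor product in $H^{\otimes r}$ need be verified, and no Sullivan-model construction is required for the upper bound beyond the formality identity $\TC_r=\zcl_r$.

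Two concrete problems with your outline as written. First, the assertion ``by Friedlander--Halperin, $\cat(\Lambda V)$ equals the formal dimension $\fd(\Lambda V)$'' is false: already for $\mathbb{CP}^n$ one has $\cat=n$ but $\fd=2n$. What F\'elix--Halperin gives for elliptic spaces is $\cat_0=e_0$ (the Toomer invariant), not $\cat_0=\fd$; your Poincar\'e-duality dimension count built on this would therefore need repair. Second, your upper-bound sketch proposes to adapt the relative-model constructions of \cite{HRV}, \cite{HRV1}, but those are designed for the \emph{non-formal} constant-length case; here you are aiming at an inequality strictly stronger than the generic $\TC_r\le r\cdot\cat$ (since $\chi_\pi\le 0$), and in the formal case the sharp upper bound is obtained for free from $\TC_r=\zcl_r$ together with the exact computation of $\zcl_r$ via Amann's splitting. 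The ``main obstacle'' you flag---the Koszul non-vanishing across $r$ slots with interaction from the unmatched odd generators---simply dissolves once that splitting is invoked, because the $F_0$ part and the odd-sphere part decouple completely at the level of $\ker\mu^r$.
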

In the non-formal case the previous equality is not known. But we prove, in a particular case, that the same expression gives an upper bound for the higher rational topological complexity. More precisely:
\begin{theorem}
	Let $(\Lambda V,d)$ be an elliptic pure minimal model together with a differential of constant length. Then 
	$$\TC_r(\Lambda V)\leq r\cdot\cat(\Lambda V)+\chi_{\pi}(\Lambda V).$$ 
\end{theorem}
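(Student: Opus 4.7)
The approach is to reduce the inequality to a Sullivan-model computation by invoking Carrasquel's characterization of $\TC_r$ as the module LS category $\Mcat$ of the multiplication map $\mu_r\colon (\Lambda V)^{\otimes r}\to\Lambda V$. Once that identification is in place, producing the upper bound amounts to exhibiting an explicit $(\Lambda V)^{\otimes r}$-equivariant retraction of a relative Sullivan model of $\mu_r$ whose word-length in the adjoined generators does not exceed $r\cdot\cat(\Lambda V)+\chi_\pi(\Lambda V)$.

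\textbf{Step 1 (relative model for $\mu_r$).} First I would build explicitly the relative minimal model $(\Lambda V)^{\otimes r}\hookrightarrow \bigl((\Lambda V)^{\otimes r}\otimes \Lambda W,D\bigr)\xrightarrow{\simeq}\Lambda V$ with $W=\bigoplus_{i=1}^{r-1}sV$, writing $u^{(i)}_\alpha$ for the generator of the $i$-th copy of $sV$ corresponding to $v_\alpha\in V$. The pure assumption $dV^{\mathrm{even}}=0$ yields the simple formula $Du^{(i)}_\alpha=v^{(i)}_\alpha-v^{(i+1)}_\alpha$ whenever $v_\alpha\in V^{\mathrm{even}}$; for $v_\alpha\in V^{\mathrm{odd}}$ a correction is needed, $Du^{(i)}_\alpha=v^{(i)}_\alpha-v^{(i+1)}_\alpha-\Phi^{(i)}_\alpha$, where $\Phi^{(i)}_\alpha$ is a chosen primitive of $dv^{(i)}_\alpha-dv^{(i+1)}_\alpha$ lying in the ideal generated by the $u^{(j)}_\beta$ with $v_\beta\in V^{\mathrm{even}}$ and $j\le i$. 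The constant-length hypothesis $dV\subset\Lambda^l V$ forces $\Phi^{(i)}_\alpha$ to be homogeneous of word-length exactly $l$, a feature that is used decisively in the next step.

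\textbf{Step 2 (constructing the retraction).} Using the standard retraction realising $\cat(\Lambda V)$ available in the pure case, I would tensor $r$ copies to obtain a $(\Lambda V)^{\otimes r}$-equivariant retraction of length $r\cdot\cat(\Lambda V)$ for the projection $(\Lambda V)^{\otimes r}\otimes\Lambda W\twoheadrightarrow \Lambda V$. Then, using the defining relations of Step~1, I would substitute every $u^{(i)}_\alpha$ associated to $v_\alpha\in V^{\mathrm{odd}}$ by $v^{(i)}_\alpha-v^{(i+1)}_\alpha-\Phi^{(i)}_\alpha$, thereby expressing these generators in terms of elements already controlled by the retraction. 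Because the differential has constant length, each such substitution keeps the word-length profile of the retraction under control; and because there are exactly $\dim V^{\mathrm{odd}}-\dim V^{\mathrm{even}}=-\chi_\pi(\Lambda V)$ more odd generators than even ones, iterating the substitution shortens the retraction by $|\chi_\pi(\Lambda V)|$ levels, yielding $\TC_r(\Lambda V)\le r\cdot\cat(\Lambda V)+\chi_\pi(\Lambda V)$.

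\textbf{Main obstacle.} The principal technical point is the bookkeeping in Step~2: one must verify that the substitution is compatible with $D$ across all $r-1$ homotopy layers and does not reintroduce terms at higher word-length. The constant-length hypothesis enters precisely here, as it confines each correction $\Phi^{(i)}_\alpha$ to a single stratum and permits an induction on the layer index $i$. The argument of \cite{HRV,HRV1} handles the case $r=2$; extending it to arbitrary $r$ is conceptually the same, but the cross-layer interactions produced by the $r-1$ copies of $sV$ call for a careful ordering of the inductive steps and for a comparison between the length filtration on $(\Lambda V)^{\otimes r}\otimes \Lambda W$ and that on the target $\Lambda V$.
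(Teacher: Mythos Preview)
Your approach is genuinely different from the paper's, and as written it has a real gap.

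\medskip
\textbf{What the paper does.} The paper never builds a retraction directly. Instead it invokes a structure theorem (\cite[Theorem~1.2]{HRV}): for a pure elliptic model with differential of constant length $l$ there is an $F_0$-submodel $(\Lambda Z,d)\hookrightarrow(\Lambda V,d)$ with $Z^{\mathrm{even}}=V^{\mathrm{even}}$, so that $\Lambda V=\Lambda Z\otimes\Lambda U$ with $U$ concentrated in odd degrees and $dU\subset\Lambda Z$. Two prior results then finish the job: Corollary~\ref{cor10} gives $\TC_r(\Lambda Z\otimes\Lambda U)\le\TC_r(\Lambda Z)+(r-1)\dim U$, and the formal ($F_0$) case gives $\TC_r(\Lambda Z)=r\cdot\cat(\Lambda Z)$. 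The constant-length hypothesis enters only through the Lechuga--Murillo formula $\cat=\dim V^{\mathrm{odd}}+(l-2)\dim V^{\mathrm{even}}$, which makes the arithmetic close up to exactly $r\cdot\cat(\Lambda V)-\dim U=r\cdot\cat(\Lambda V)+\chi_\pi(\Lambda V)$. No explicit relative model of $\mu_r$ and no retraction are ever written down.

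\medskip
\textbf{Where your argument breaks.} Step~2 is the problem. The sentence ``substitute every $u^{(i)}_\alpha$ associated to $v_\alpha\in V^{\mathrm{odd}}$ by $v^{(i)}_\alpha-v^{(i+1)}_\alpha-\Phi^{(i)}_\alpha$'' asks to replace a generator by its own differential; this is not a cdga (or even dg-module) operation and does not produce a new retraction. More structurally, the $u^{(i)}_\alpha$ with $v_\alpha$ odd are \emph{even} generators of $W$, so they are not nilpotent in $\Lambda W$; there is no algebraic mechanism by which ``eliminating'' them lowers the word-length of a retraction by one unit each. The counting ``$\dim V^{\mathrm{odd}}-\dim V^{\mathrm{even}}$ substitutions, hence a drop of $|\chi_\pi|$'' is therefore unsupported: nothing in your outline ties a single odd generator of $V$ to a single unit of word-length saved. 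Your own ``main obstacle'' paragraph flags the bookkeeping, but the issue is deeper than bookkeeping---the substitution step needs to be replaced by an actual construction. In the paper's language, what really controls the saving is not a per-generator substitution but the fact that the odd excess $U$ can be adjoined to an $F_0$ piece at a cost of only $r-1$ per generator (the content of Corollary~\ref{cor10}); if you want a direct retraction-level proof you would need to reprove that corollary inside your framework, not bypass it.
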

The last part is devoted to the particular case of coformal spaces that is, when the differential is of constant length $2$. In this context we improve the general lower bound of the higher rational topological complexity given by $(r-1)\cdot \cat(\Lambda V)$ through the following theorem
\begin{theorem}
	Let $(\Lambda V,d)$ a coformal pure elliptic minimal model and $\mathcal{B}$ is a basis of $V^{even}$. Then $$(r-1)\cdot \cat(\Lambda V)+ L(\Lambda V,\mathcal{B})\leq \TC_r(\Lambda V).$$ 
\end{theorem}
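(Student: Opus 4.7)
My strategy is to refine the standard lower bound $(r-1)\cdot\cat(\Lambda V)\leq \TC_r(\Lambda V)$ by producing $L(\Lambda V,\mathcal{B})$ additional independent zero-divisor factors coming from the even generators in $\mathcal{B}$. The argument will take place entirely on the Sullivan model side: by the higher analogue of Carrasquel's characterization of $\TC_r$ invoked in the introduction, it suffices to exhibit a nonzero cohomology class of the required cup length in the kernel of the iterated multiplication $\mu_r\colon(\Lambda V)^{\otimes r}\to \Lambda V$. Equivalently, I will lower-bound the $r$-th zero-divisor cup length $\zcl_r(\Lambda V)$.

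First I fix a cohomology class $\omega\in H^{*}(\Lambda V)$ realising $\cat(\Lambda V)=c$, written as a nonzero product of $c$ cohomology classes. Let $\omega^{(j)}$ denote the image of $\omega$ under the inclusion $\Lambda V\hookrightarrow (\Lambda V)^{\otimes r}$ into the $j$-th tensor factor. The standard pure-space argument (expansion together with nonvanishing of $\omega^{(1)}\cdots\omega^{(r)}$ at the top of the cohomology) then shows that
$$\prod_{j=2}^{r}\bigl(\omega^{(j)}-\omega^{(1)}\bigr)$$
is a nonzero class in $H^{*}((\Lambda V)^{\otimes r})$, lies in $\ker\mu_r$, and contributes $(r-1)c$ to the zero-divisor cup length.

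Next, I will append to this product $L(\Lambda V,\mathcal{B})$ additional zero-divisor factors built from classes of the form $x^{(i)}-x^{(1)}$ for $x\in\mathcal{B}$, as dictated by the definition of $L(\Lambda V,\mathcal{B})$. Because the differential has constant length $2$ and $dV^{even}=0$, each element of $\mathcal{B}$ is a cocycle, so these zero-divisors are well defined in cohomology. Moreover the only coboundaries that could possibly kill the enlarged product come from the quadratic relations $dy\in\Lambda^{2}V^{even}$ for $y\in V^{odd}$, which severely restricts the cancellations that can occur.

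The hard part is showing that the resulting product of total length $(r-1)c+L(\Lambda V,\mathcal{B})$ remains nonzero in $H^{*}((\Lambda V)^{\otimes r})$. I plan to handle this by filtering $(\Lambda V)^{\otimes r}$ by total word-length in $V^{odd}$: on the associated graded the differential vanishes on the pure-$V^{even}$ summand, so the leading term of my product lies in $(\Lambda V^{even})^{\otimes r}$, where there are no coboundaries. Nonvanishing then reduces to a purely polynomial statement in $(\Lambda V^{even})^{\otimes r}$, which should follow by directly unwinding the definition of $L(\Lambda V,\mathcal{B})$. Together with the zero-divisor cup-length lower bound this yields
$$(r-1)\cdot\cat(\Lambda V)+L(\Lambda V,\mathcal{B})\leq \zcl_r(\Lambda V)\leq \TC_r(\Lambda V),$$
as required.
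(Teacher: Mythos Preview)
Your plan has a genuine gap rooted in a misreading of $L(\Lambda V,\mathcal{B})$. This invariant is not a cup length of the even generators $x\in\mathcal{B}$ inside $H^*(\Lambda V)$. It is defined in the \emph{auxiliary} model $\Lambda W_{\mathcal{B}}=\Lambda(V\oplus U)$ obtained by adjoining odd generators $u_i$ with $du_i=x_i^2$, and it is the longest nonzero product of classes lying in $H_{\mathrm{odd},*}(\Lambda W_{\mathcal{B}})$. The witnessing classes $[z_j]$ may have arbitrary odd $X$-word-length and may involve the new generators $u_i$; they need not come from $\Lambda V$ at all. Consequently there is no justification for producing ``$L(\Lambda V,\mathcal{B})$ additional zero-divisor factors of the form $x^{(i)}-x^{(1)}$'' in $(\Lambda V)^{\otimes r}$, and ``directly unwinding the definition of $L(\Lambda V,\mathcal{B})$'' cannot reduce to a polynomial statement in $(\Lambda V^{\mathrm{even}})^{\otimes r}$. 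The filtration step is also wrong as stated: in a pure model the differential sends $\Lambda^{q}V^{\mathrm{odd}}\otimes\Lambda V^{\mathrm{even}}$ to $\Lambda^{q-1}V^{\mathrm{odd}}\otimes\Lambda V^{\mathrm{even}}$, so $(\Lambda V^{\mathrm{even}})^{\otimes r}$ \emph{does} receive coboundaries (the ideal generated by the $dy_j$), and on the associated graded the induced differential vanishes everywhere, which only tells you your product is a nonzero element, not a nonzero cohomology class.

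The paper proceeds quite differently and never touches $\zcl_r$. Using an explicit cocycle $\Omega\in(\ker\mu_{\Lambda V})^{m}$ built from the Lechuga--Murillo fundamental class, it first proves the growth inequality $\cat(\Lambda V)+\MTC_r(\Lambda V)\le\MTC_{r+1}(\Lambda V)$ for pure coformal elliptic models. Iterating and applying this to $\Lambda W_{\mathcal{B}}$, together with the $r=2$ input $\cat(\Lambda W_{\mathcal{B}})+L(\Lambda V,\mathcal{B})\le\MTC(\Lambda W_{\mathcal{B}})$ from \cite{HRV1}, yields $(r-1)\cat(\Lambda W_{\mathcal{B}})+L(\Lambda V,\mathcal{B})\le\MTC_r(\Lambda W_{\mathcal{B}})$. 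The extension bound $\MTC_r(\Lambda W_{\mathcal{B}})\le\MTC_r(\Lambda V)+n(r-1)$ (Corollary~\ref{cor10}) and $\cat(\Lambda W_{\mathcal{B}})=\cat(\Lambda V)+n$ then cancel the surplus $n(r-1)$ and give the theorem. The detour through $\Lambda W_{\mathcal{B}}$ and through the module invariant $\MTC_r$ is essential precisely because $L(\Lambda V,\mathcal{B})$ lives in the extended model, not in $\Lambda V$.
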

Here $L(\Lambda V,\mathcal{B})$ denotes a certain cuplength introduced in \cite{HRV1}. This result is obtained through the following theorem, which improves for the spaces in consideration Lemma $4.1$ of Kishimoto-Yamaguchi \cite{KY}.
\begin{theorem}
For any elliptic pure coformal minimal model $(\Lambda V,d)$, we have
$$ \cat(\Lambda V)+\MTC_r(\Lambda V)\leq \MTC_{r+1}(\Lambda V).$$
\end{theorem}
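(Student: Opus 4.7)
The plan is to adapt the scheme of Lemma~$4.1$ of Kishimoto--Yamaguchi to the Sullivan-model setting, working with relative Sullivan models of the multiplication maps and tracking module sections through a length bifiltration. Set $A = \Lambda V$ and realise the $r$-fold multiplication $\mu_r\colon A^{\otimes r} \to A$ through a relative Sullivan model $A^{\otimes r} \hookrightarrow A^{\otimes r} \otimes \Lambda W_r \xrightarrow{\simeq} A$, where $W_r$ is the graded vector space with basis given by $r-1$ shifted copies of a basis of $V$---one shift for each identification between consecutive tensor factors. Then $\MTC_r(\Lambda V)$ is the least $n$ for which the projection $A^{\otimes r}\otimes \Lambda W_r \twoheadrightarrow A^{\otimes r}\otimes \Lambda W_r/\Lambda^{>n}W_r$ admits an $A^{\otimes r}$-module retract.

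I would use the factorization $\mu_{r+1}=\mu_r\circ(\mathrm{id}_{A^{\otimes(r-1)}}\otimes \mu_2)$ to present a compatible relative model for $\mu_{r+1}$ as $A^{\otimes(r+1)}\otimes \Lambda W_{r+1}$ with $W_{r+1}\cong W_r\oplus \bar V$, where $\bar V$ (a shifted copy of $V$) records the $\mu_2$-identification between the last two tensor factors. Given an $A^{\otimes(r+1)}$-module section $\sigma$ witnessing $\MTC_{r+1}(\Lambda V)\leq N$, I would equip $A^{\otimes(r+1)}\otimes \Lambda W_{r+1}$ with the bifiltration by $(W_r,\bar V)$-length and analyse $\sigma$ with respect to it. The aim is to extract from $\sigma$, by restriction of scalars along $A^{\otimes r}\hookrightarrow A^{\otimes(r+1)}$ and a quotient in the $\bar V$-direction, an $A^{\otimes r}$-module section for $\mu_r$ of $W_r$-length at most $N-\cat(\Lambda V)$, hence proving $\MTC_r(\Lambda V)\leq N-\cat(\Lambda V)$ and the theorem.

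The main obstacle is controlling the $\bar V$-length contribution to $\sigma$: it must be bounded below by $\cat(\Lambda V)=\dim V^{\mathrm{odd}}$, which amounts to a length-additivity statement for the bifiltration. This is where pure coformality enters decisively: since the differential is quadratic and sends $V^{\mathrm{odd}}$ into $\Lambda^2V^{\mathrm{even}}$, a $W_r$-cocycle and a $\bar V$-cocycle cannot combine through $d$ without a strict increase in total length, so the bifiltration becomes additive on cocycles modulo boundaries. This cohomological decoupling is precisely what fails in the general setting of \cite[Lemma~4.1]{KY}, and its availability for pure coformal elliptic minimal models is what produces the sharper inequality stated here.
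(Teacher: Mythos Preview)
Your plan has the right architecture but leaves the decisive step as an assertion. You correctly isolate the obstacle---showing that the $\bar V$-length must absorb at least $\cat(\Lambda V)=m$ from the total length $N$---and then claim that pure coformality forces a ``cohomological decoupling'' making the bifiltration additive. This is not justified. In the relative model of $\mu_{r+1}$ the differential on $W_{r+1}$ is not simply the quadratic differential of $V$: for $\bar y$ with $y\in V^{\mathrm{odd}}$ one has $d\bar y$ containing correction terms that mix $\bar V^{\mathrm{even}}$ with the base $A^{\otimes(r+1)}$, and the module section $\sigma$ is under no obligation to respect your bifiltration. The sentence ``a $W_r$-cocycle and a $\bar V$-cocycle cannot combine through $d$ without a strict increase in total length'' is precisely the content to be proved, not a consequence of $dV^{\mathrm{odd}}\subset\Lambda^2V^{\mathrm{even}}$. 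And even granting it, ``restriction of scalars plus a quotient in the $\bar V$-direction'' does not visibly produce an $A^{\otimes r}$-module section of $\rho^r_{N-m}$: after restriction the target is still built over $A^{\otimes(r+1)}$, and killing $\bar V$ does not collapse the extra tensor factor of $A$.

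The paper's argument is quite different and sidesteps this difficulty by an explicit construction. Using pure coformality it writes down a cocycle $\Omega\in(\ker\mu_{\Lambda V})^{m}\subset(\Lambda V)^{\otimes 2}$---obtained as the coefficient of $\prod_i(\bar x_i+\bar x_i')$ in $\prod_j(y_j-y_j'-\phi_j)$---whose projection to the second tensor factor is $(-1)^m\omega'$, the Lechuga--Murillo representative of the top class of $\Lambda V$. Because $[\omega']\neq 0$ one gets a $\Lambda V$-linear ``integration'' $I\colon\Lambda V\otimes\Lambda V\to\Lambda V$ with $I(\Omega)=1$. Multiplication by $\Omega$ gives a $(\Lambda V)^{\otimes r}$-module map $f\colon(\Lambda V)^{\otimes r}\to(\Lambda V)^{\otimes(r+1)}$ which, since $\Omega\in(\ker\mu)^m$, descends to $\tilde f$ between the quotients by $(\ker\mu^r)^{p+1}$ and $(\ker\mu^{r+1})^{m+p+1}$; composing a module retraction $\tau$ witnessing $\MTC_{r+1}\le m+p$ with $\tilde f$ and then with $I_r=\mathrm{id}\otimes I$ yields a retraction for $\rho^r_p$. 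Pure coformality enters only to make $\Omega$ and $\omega'$ explicitly constructible and to give $\cat(\Lambda V)=\dim V^{\mathrm{odd}}=m$; this replaces your unproved additivity statement with a concrete algebraic identity.
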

\section*{Preliminaries}
Let $(\Lambda V,d)$ represent a  minimal Sullivan model of $Z$ and denote by $(\Lambda V)^{\otimes r}:=\Lambda V\otimes \Lambda V\otimes \cdots \otimes \Lambda V$ $r-$times. We consider the projection $$\rho^r_{n}: ((\Lambda V)^{\otimes r},d)\rightarrow \left(\frac{(\Lambda V)^{\otimes r}}{ (\ker \mu^r_{ \Lambda V})^{n+1}} ,\bar{d}\right)$$
where $\mu^r_{ \Lambda V}: (\Lambda V)^{\otimes r}\rightarrow \Lambda V$ is the multiplication of the algebra $\Lambda V$. Recall that $\rho^r_{n}$ admits a homotopy retraction if there exists a cdga morphism $\tau$ fitting in the following diagram
\begin{equation*}
	\xymatrix@=4pc{
		& (\Lambda V)^{\otimes r} \otimes \Lambda W \ar@/_2.5pc/@{-->}[dl]_-{\tau}  \ar[d]^{\xi}_-{\simeq}\\
		(\Lambda V)^{\otimes r} \ar@{^{(}->}[ur]_-{i} \ar[r]_-{\rho^r _ {n}} & \frac{(\Lambda V)^{\otimes r}}{ (\ker \mu^r _{\Lambda V})^{n+1}}
	}
\end{equation*}
In what follows, for any $v\in V$, we use the notation $v(j)= 1\otimes 1\otimes \cdots\otimes v\otimes 1\otimes \cdots \otimes 1\in (\Lambda V)^{\otimes r}$ where $v$ is at the $j^{th}$ position. We also recall that if $(\Lambda V,d)=(\Lambda(v_1, \cdots , v_n),d)$, then $\ker \mu^r_{ \Lambda V}$ is an ideal of $(\Lambda V)^{\otimes r} $ generated by $\{ v_i(j) -v_i(j+1) : i=1, \cdots , n \text{ and } j=1,\cdots ,r-1\}$, see \cite[Lemma 2.3]{JMP}.
In the following, we recall the higher rational topological complexity characterization established by Carrasquel \cite{C2}.
\begin{theorem} Let $Z$ be a simply-connected CW-complex of finite type and $(\Lambda V,d)$ its minimal model. Then $\TC_r(Z_0)=\TC_r(\Lambda V)$ satisfies
\begin{center}
$\TC_r(\Lambda V)\leq m$ if and only if $\rho^r_m$ admits a homotopy retraction.
\end{center}	
\end{theorem}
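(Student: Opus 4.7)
The plan is to identify $\TC_r(Z)$ with the sectional category of the iterated diagonal and then apply the general rational characterization of $\secat$ in terms of Sullivan models.

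First, I would observe that the fibre of $ev_r$ over $(z_0,\ldots,z_{r-1})$ is the product $\prod_{i=1}^{r-1}P(Z;z_{i-1},z_i)$ of path spaces, so $ev_r$ is fibre-homotopy equivalent to the path-space substitute of the iterated diagonal $\Delta_r:Z\to Z^r$. Hence $\TC_r(Z)=\secat(\Delta_r)$, and because sectional category of a map between simply-connected finite-type spaces is preserved under rationalization, $\TC_r(Z_0)=\secat((\Delta_r)_0)$.

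Second, a surjective Sullivan model of $\Delta_r$ is given by the iterated multiplication $\mu^r_{\Lambda V}:((\Lambda V)^{\otimes r},d)\to(\Lambda V,d)$, since $H^*(Z^r;\Q)=H^*(Z;\Q)^{\otimes r}$ and $\Delta_r^*$ is precisely the iterated cup product. At this point I would invoke the general rational characterization of sectional category of Carrasquel \cite{C2}: for a fibration $p$ with surjective cdga model $\varphi:A\to B$, one has $\secat(p_0)\leq m$ if and only if the projection $A\to A/(\ker\varphi)^{m+1}$ admits a homotopy retraction in the precise sense of the diagram displayed above, namely through a relative Sullivan model $\xi:A\otimes\Lambda W\xrightarrow{\simeq}A/(\ker\varphi)^{m+1}$ together with a cdga map $\tau:A\otimes\Lambda W\to A$ satisfying $\tau\circ i=\mathrm{id}_A$. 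Specializing to $\varphi=\mu^r_{\Lambda V}$ yields exactly the stated criterion for $\TC_r(\Lambda V)\leq m$.

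The hard part is the underlying general $\secat$ theorem itself, whose proof rests on the Ganea--James description of sectional category. Geometrically, $\secat(p)\leq m$ if and only if the $(m+1)$-fold fibrewise join $\ast^{m+1}_B p$ admits a section. Algebraically, the total space of this iterated join is modelled by the inclusion $A\hookrightarrow A\otimes\Lambda W$ inside a relative Sullivan model of the quotient $A\to A/(\ker\varphi)^{m+1}$, the key identification being that the ideal $(\ker\varphi)^{m+1}$ corresponds to the $(m+1)$-fold fibrewise smash and is thus annihilated by any fibrewise-join section. Passing between an algebraic retraction and a geometric section rationally uses Sullivan realization in one direction and the PL de Rham functor applied to a chosen section in the other; this translation is the technically delicate point. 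Once the general machinery is in place, the theorem follows by direct specialization to the multiplication $\mu^r_{\Lambda V}$, with the explicit set of generators of $\ker\mu^r_{\Lambda V}$ recorded in the excerpt making the algebraic side completely concrete.
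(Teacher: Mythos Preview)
The paper does not prove this statement; it is recalled in the Preliminaries as a result of Carrasquel \cite{C2} and is simply cited without argument. Your proposal is therefore not competing with any proof in the paper but rather sketching the content of the reference, and as such it is essentially correct: one identifies $\TC_r$ with $\secat(\Delta_r)$, models $\Delta_r$ by the multiplication $\mu^r_{\Lambda V}$, and then invokes Carrasquel's general Sullivan-model characterization of rational sectional category via the quotient $A\to A/(\ker\varphi)^{m+1}$.

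One minor wording issue: the step ``$\TC_r(Z_0)=\secat((\Delta_r)_0)$'' is not a consequence of sectional category being \emph{preserved} under rationalization (it is not; one only has $\secat(p_0)\leq\secat(p)$). Rather, it follows by applying the identity $\TC_r(-)=\secat(\Delta_r(-))$ directly to the space $Z_0$, together with the fact that $(Z_0)^r\simeq (Z^r)_0$ so that the diagonal of $Z_0$ is a rationalization of $\Delta_r$. With that adjustment your outline is a faithful summary of the argument behind the cited theorem.
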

Moreover it follows from \cite{C2} that the invariants  $\MTC_r(Z)=\MTC_r(\Lambda V)$ and $\HTC_r(Z)=\HTC_r(\Lambda V)$, which satisfy $\HTC_r(\Lambda V)\leq \MTC_r(\Lambda V)\leq \TC_r(\Lambda V),$ can be characterized as follows.
\begin{itemize}
\item  $\MTC_r(\Lambda V)$ is the least integer $n$ for which $\rho^r_n$ admits a homotopy retraction of $(\Lambda V)^{\otimes r}-$modules. 
\item  $\HTC_r(\Lambda V)$ is the least integer $n$ for which $\rho^r_n$ is injective in cohomology. 
\end{itemize}

\section*{The higher rational topological complexity of formal spaces}

 Through the current section we give a specific formula for the higher rational topological complexity of formal spaces. Recall that for this type of spaces it is already known that $\TC_r(\Lambda V)=\zcl_r(\Lambda V)$ and $\cat(\Lambda V)=\cl_{\mathbb{Q}}(\Lambda V)$ where $\zcl_r(\Lambda V)$ and $\cl_{\mathbb{Q}}(\Lambda V)$ are the $r-$zero-divisor-cuplength and the cuplength of $\Lambda V$, respectively. In a first step, we establish an equality in terms of rational LS-category for $F_0$-minimal models. Such spaces are elliptic spaces with a zero homotopy characteristic and are formal. This result can be seen as a reformulation of a result established by Carrasquel (in \cite{C3}).
\begin{lemma} \label{lem6}Let $(\Lambda V,d)$ be an $F_0$-model. Then
	$$ \TC_r(\Lambda V)=r\cdot \cat(\Lambda V) \text{ or equivalently } \zcl_r(\Lambda V)=r \cdot \cl_{\mathbb{Q}}(\Lambda V).$$
\end{lemma}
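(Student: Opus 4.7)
The plan is to reduce the equality to a purely cohomological computation via the formality of $F_0$-spaces, and then to apply Carrasquel's computation from \cite{C3}.

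First, by Halperin's theorem every $F_0$-model is formal, so $(\Lambda V,d)$ is quasi-isomorphic to $(H^*(\Lambda V;\mathbb{Q}),0)$. Applying the Sullivan characterization of $\TC_r$ from \cite{C2} recalled above (and its analogue for $\cat$), we recover on the cohomological side the classical identifications $\TC_r(\Lambda V)=\zcl_r(\Lambda V)$ and $\cat(\Lambda V)=\cl_{\mathbb{Q}}(\Lambda V)$. Hence the lemma reduces to proving either one of the two equivalent formulations.

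The upper bound $\TC_r(\Lambda V)\leq r\cdot\cat(\Lambda V)$ is the standard inequality valid for any space; on Sullivan models it amounts to pasting together $r$ copies of a homotopy retraction witnessing $\cat(\Lambda V)$. For the lower bound $\zcl_r(\Lambda V)\geq r\cdot c$, where $c=\cl_{\mathbb{Q}}(\Lambda V)$, we use that $H^*(\Lambda V;\mathbb{Q})$ is a Poincar\'e duality algebra whose fundamental class $\omega$ admits a factorization $\omega=\alpha_1\cdots\alpha_c$. The idea, which is exactly Carrasquel's construction in \cite{C3}, is to write the nonzero element $\omega\otimes\cdots\otimes\omega$ of $H^*(\Lambda V)^{\otimes r}$ as a product of $rc$ zero-divisors built from the differences $\alpha_i(j)-\alpha_i(k)$. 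In the model case $\Lambda V=\Lambda(x,y)$ with $dy=x^{n+1}$, where $c=n$ and $\omega=x^n$, a concrete witness is
\[
(x(1)-x(2))^{2n}(x(1)-x(3))^n\cdots(x(1)-x(r))^n,
\]
which has length $rn$ and, after expansion modulo the relations $x(j)^{n+1}=0$, yields a nonzero scalar multiple of $x(1)^n\cdots x(r)^n$. The general case is handled by an analogous multilinear computation over the complete intersection presenting $H^*(\Lambda V;\mathbb{Q})$.

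The crux of the argument, and the step I expect to be the main obstacle, is verifying non-vanishing of this length-$rc$ product after all cancellations forced by the ideal of relations defining $H^*(\Lambda V;\mathbb{Q})$; Carrasquel handles this by a leading-monomial bookkeeping argument that exploits the regular-sequence property of the relations presenting the cohomology of an $F_0$-space.
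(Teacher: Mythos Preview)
Your reduction via formality to the cohomological statement is fine, and the upper bound is indeed standard. The difference from the paper, and the gap in your plan, both lie in the lower bound.

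The paper does not build an explicit length-$rc$ product at all. It uses the recursive inequality $\zcl_r(\Lambda V)\geq \zcl_{r-1}(\Lambda V)+\cl_{\mathbb{Q}}(\Lambda V)$ from \cite{FKS} (see also \cite{C3}), iterates it down to $r=2$, and then invokes from \cite{C3} only the base case $\zcl(\Lambda V)=2\,\cl_{\mathbb{Q}}(\Lambda V)$ for algebras with cohomology concentrated in even degrees. That is the whole argument.

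Your plan instead attempts a direct construction for arbitrary $r$. The single-generator example $(\Lambda(x,y),\,dy=x^{n+1})$ works as written. But you then assert that ``the general case is handled by an analogous multilinear computation'' and that Carrasquel carries out the required ``leading-monomial bookkeeping'' in \cite{C3}. This is where the proposal breaks: \cite{C3} treats the ordinary ($r=2$) zero-divisor cuplength, not a direct non-vanishing computation for all $r$, and it does not contain the argument you are citing. For a general $F_0$-model with several even generators, the non-vanishing of your length-$rc$ product is precisely the obstacle you yourself flag, and it is not discharged by a reference. If you want to keep the direct approach you would have to supply that computation yourself; the paper's inductive route avoids it entirely, reducing everything to the $r=2$ statement that \cite{C3} actually proves.
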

\begin{proof}
From \cite{FKS} (see also \cite{C3}) we have $\zcl_r(\Lambda V) \geq \zcl_{r-1}(\Lambda V)+\cl_{\mathbb{Q}} (\Lambda V) $. Iteratively, we obtain $\zcl_r(\Lambda V) \geq \zcl(\Lambda V)+(r-2)\cdot \cl_{\mathbb{Q}}(\Lambda V)$. Since an $F_0$-model has its cohomology concentrated in even degrees, it is known by \cite{C3} that $\zcl(\Lambda V)=2\cl_{\mathbb{Q}}(\Lambda V)$. Consequently $\zcl_r(\Lambda V) \geq r\cdot \cl_{\mathbb{Q}}(\Lambda V)$. The converse inequality is immediate and by formality we obtain the equality $\TC_r(\Lambda V)=r\cdot \cat(\Lambda V)$.
\end{proof}

\begin{lemma}\label{lm8}
If $(\Lambda V,d)= (\Lambda V',d)\otimes (\Lambda (z_1,\cdots, z_l),0)$ is a minimal model where $z_1,\cdots ,z_l$ are cocycles of odd degrees. Then $$ \zcl_r(\Lambda V,d)= \zcl_r(\Lambda V',d) +l\cdot(r-1).$$
\end{lemma}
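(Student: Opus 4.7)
The plan is to exploit the fact that $(\Lambda(z_1,\ldots,z_l),0)$ is already its own cohomology, so the tensor product structure descends to the cohomology of $(\Lambda V)^{\otimes r}$. By K\"unneth, $H^*((\Lambda V)^{\otimes r})\cong A^{\otimes r}\otimes B^{\otimes r}$, where $A:=H^*(\Lambda V',d)$ and $B:=\Lambda(z_1,\ldots,z_l)$, and under this identification the map induced in cohomology by $\mu^r_{\Lambda V}$ factors as $\mu^r_A\otimes \mu^r_B$. First I would record the standard fact that the kernel of a tensor product of surjective graded algebra maps decomposes as
$$I:=\ker(\mu^r_A\otimes\mu^r_B)=I_A\otimes B^{\otimes r}\;+\;A^{\otimes r}\otimes I_B,$$
where $I_A=\ker\mu^r_A$ and $I_B=\ker\mu^r_B$.

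The central step is to observe that the two summands above are ideals coming from commuting tensor factors, so
$$I^N=\sum_{k=0}^{N}I_A^{\,k}\otimes I_B^{\,N-k},$$
with $I_A^0=A^{\otimes r}$ and $I_B^0=B^{\otimes r}$. Since the factors $A^{\otimes r}$ and $B^{\otimes r}$ are independent, a tensor $I_A^{k}\otimes I_B^{N-k}$ is nonzero if and only if both factors are, and hence $I^N\neq 0$ if and only if there is some $k$ with $k\leq \zcl_r(\Lambda V')$ and $N-k\leq \zcl_r(B)$. This yields the additivity
$$\zcl_r(\Lambda V)=\zcl_r(\Lambda V')+\zcl_r(B).$$

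It then remains to compute $\zcl_r(B)=l(r-1)$. I would perform the triangular change of basis $w_k(j):=z_k(j)-z_k(j+1)$ for $j=1,\ldots,r-1$ and $w_k(r):=z_k(r)$; in the new generators $B^{\otimes r}$ is the exterior algebra on the $lr$ odd elements $w_k(j)$, and by \cite[Lemma 2.3]{JMP} the ideal $I_B$ is generated precisely by the $w_k(j)$ with $j\neq r$. Since in an exterior algebra any product of distinct odd generators is nonzero, the maximum-length nonzero product of elements of $I_B$ is obtained by multiplying all $l(r-1)$ such generators together, giving $\zcl_r(B)=l(r-1)$ and completing the proof.

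The only genuine obstacle is the second step: one must be sure that the ideal $I$ really splits as $I_A\otimes B^{\otimes r}+A^{\otimes r}\otimes I_B$ and that powers of this sum decompose cleanly as above. Both rest on the commutativity of the two tensor factors and on the (cohomology-level) surjectivity of $\mu^r_A$ and $\mu^r_B$; once these are secured the remainder of the argument is essentially a linear-algebraic count in an exterior algebra.
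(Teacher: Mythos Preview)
Your argument is correct and follows essentially the same route as the paper: both proofs pass to cohomology via K\"unneth, use the decomposition $\ker\mu^r_{H(\Lambda V)}=\ker\mu^r_{H(\Lambda V')}\otimes B^{\otimes r}+H(\Lambda V')^{\otimes r}\otimes\ker\mu^r_B$, and combine the nilpotencies of the two summand ideals with the exterior-algebra count $\zcl_r(\Lambda(z_1,\ldots,z_l))=l(r-1)$. Your version packages the two inequalities into a single additivity statement $\zcl_r(\Lambda V)=\zcl_r(\Lambda V')+\zcl_r(B)$ via the power formula $I^N=\sum_k I_A^k\otimes I_B^{N-k}$, whereas the paper treats the lower bound by exhibiting an explicit nonzero product and the upper bound by the same ideal computation; the content is the same.
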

\begin{proof}
Let $\zcl_r(\Lambda V')=p$, then there exist $p$ cocycles $\alpha_1, \cdots, \alpha_p \in \ker\mu^r_{ \Lambda V'}$ satisfying  $[\alpha_1] \cdots [\alpha_p]\neq 0$. As $H(\Lambda V)=H(\Lambda V') \otimes \Lambda(z_1,\cdots, z_l)$ we see that $$ [\alpha_1] \cdots [\alpha_p]\prod_{i=1}^l \prod_{j=1}^{r-1}[z_i(j)-z_i(j+1)]\neq 0.$$
Consequently $ \zcl_r(\Lambda V,d)\geq \zcl_r(\Lambda V',d) +l (r-1)$. Conversely, we have $$\ker \mu^r_{ H(\Lambda V)}= \ker \mu^r_{ H(\Lambda V')}\otimes (\Lambda(z_1,\cdots , z_l))^{\otimes r} + H(\Lambda V')^{\otimes r}\otimes \ker \mu^r_{ \Lambda (z_1,\cdots , z_l)}$$  
where $\mu^r_{ H(\Lambda V)}$ and $\mu^r_{ H(\Lambda V')}$ are respectively the multiplications of $H(\Lambda V)^{\otimes r}$ and $H(\Lambda V')^{\otimes r}$. By computation, and taking into consideration the previous inequality besides the facts $(\ker \mu^r_{ H(\Lambda V')})^{> \zcl_r(\Lambda V')}=0$  and $(\ker \mu^r_{\Lambda (z_1,\cdots ,z_l) })^{> l(r-1)}=0$, we can check that $(\ker \mu^r_{ H(\Lambda V)})^{\zcl_r(\Lambda V')+l (r-1) +1}=0$. That consequently implies  $\zcl_r(\Lambda V)\leq \zcl_r(\Lambda V')+l\cdot(r-1)$. 
\end{proof}
\begin{theorem} \label{th8}
Let $(\Lambda V,d)$ be a pure elliptic minimal model which is formal. Then
$$\TC_r(\Lambda V)=r \cdot \cat(\Lambda V)+\chi_{\pi}(\Lambda V).$$
\end{theorem}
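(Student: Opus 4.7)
The plan is to reduce the computation to the $F_0$ case already handled by Lemma \ref{lem6}. Since $(\Lambda V, d)$ is pure elliptic, the homotopy characteristic satisfies $\chi_{\pi}(\Lambda V) \leq 0$; set $l := -\chi_{\pi}(\Lambda V) = \dim V^{odd} - \dim V^{even} \geq 0$. The first step is to exploit formality together with purity in order to split the minimal model as
$$(\Lambda V, d) \cong (\Lambda V', d) \otimes (\Lambda(z_1, \ldots, z_l), 0),$$
where $(\Lambda V', d)$ is an $F_0$-model and $z_1, \ldots, z_l$ are cocycles of odd degree. Concretely, I would choose a basis of $V^{odd}$ whose first $\dim V^{even}$ elements map under $d$ to a regular sequence in $\Lambda V^{even}$ (producing $(\Lambda V', d)$), while the remaining $l$ generators can be replaced, up to isomorphism of minimal models, by odd cocycles thanks to the formality assumption.

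With this splitting in hand, the computation is a direct combination of the two preceding lemmas. Since $(\Lambda V', d)$ is $F_0$, Lemma \ref{lem6} gives $\zcl_r(\Lambda V') = r \cdot \cat(\Lambda V')$. Formality is preserved by the tensor decomposition (its cohomology is $H(\Lambda V') \otimes \Lambda(z_1,\ldots,z_l)$), so $\TC_r(\Lambda V) = \zcl_r(\Lambda V)$, and Lemma \ref{lm8} yields $\zcl_r(\Lambda V) = \zcl_r(\Lambda V') + l(r-1)$. Moreover, for formal spaces $\cat$ coincides with the rational cuplength, and tensoring the cohomology with an exterior algebra on $l$ odd classes adds exactly $l$ to the cuplength, so $\cat(\Lambda V) = \cat(\Lambda V') + l$.

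Combining the three equalities yields
$$\TC_r(\Lambda V) = r \cdot \cat(\Lambda V') + l(r-1) = r\bigl(\cat(\Lambda V) - l\bigr) + l(r-1) = r \cdot \cat(\Lambda V) - l = r \cdot \cat(\Lambda V) + \chi_{\pi}(\Lambda V),$$
as claimed. The main obstacle is the very first step: rigorously justifying the splitting of $(\Lambda V, d)$ as a tensor product of an $F_0$-model with an exterior algebra on odd cocycles, using only purity, ellipticity and formality. Once that structural decomposition is in place, the rest of the argument is pure bookkeeping with Lemmas \ref{lem6} and \ref{lm8}.
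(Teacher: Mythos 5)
Your plan follows the paper's route exactly: decompose $(\Lambda V,d)$ as an $F_0$-model tensored with an exterior algebra on odd-degree cocycles, then feed the pieces into Lemmas \ref{lem6} and \ref{lm8}; the bookkeeping at the end ($\zcl_r(\Lambda V)=\zcl_r(\Lambda V')+l(r-1)$, $\zcl_r(\Lambda V')=r\cdot\cat(\Lambda V')$, $\cat(\Lambda V)=\cat(\Lambda V')+l$, $\chi_{\pi}(\Lambda V)=-l$) is also the same. The gap you flag --- rigorously justifying that decomposition from purity, ellipticity, and formality --- is the one nontrivial input, and the paper fills it by citing Amann, \emph{Non-formal homogeneous spaces} (\cite[Lemma 1.5]{AM}), which states precisely that a pure elliptic $(\Lambda V,d)$ is formal if and only if $(\Lambda V,d)\cong(\Lambda V',d)\otimes(\Lambda(z_1,\ldots,z_l),0)$ with $(\Lambda V',d)$ an $F_0$-model and the $z_i$ odd cocycle generators. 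Your heuristic (choose odd generators mapping to a regular sequence in $\Lambda V^{even}$ to form the $F_0$-part, then use formality to turn the remaining odd generators into cocycles) is the right intuition behind that lemma, but as you yourself observe it would need a genuine argument to stand on its own; the intended move is simply to invoke Amann's result. Given that citation, your proof is complete and coincides with the paper's.
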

\begin{proof}
According to \cite[Lemma 1.5]{AM}, $(\Lambda V,d)$ is formal if and only if
$$(\Lambda V,d)= (\Lambda V',d)\otimes (\Lambda (z_1,\cdots, z_l),0)$$ 
where $(\Lambda V',d)$ is an $F_0$-model and each $z_i$ is a generator of odd degree. Besides to Lemma \ref{lm8}, we obtain \begin{eqnarray*}
	\TC_r(\Lambda V)&=&\zcl_r(\Lambda V)\\
	&=& \zcl_r( \Lambda V' \otimes (z_1,\cdots, z_l))\\
	&=&  \zcl_r( \Lambda V')+l\cdot(r-1).
\end{eqnarray*} 
As $( \Lambda V',d)$ is an $F_0$-model, by Lemma \ref{lem6} we have
 $\zcl_r( \Lambda V')=r\cdot \cl_{\mathbb{Q}}(\Lambda V')=r\cdot \cat(\Lambda V')$. It is also known that $\cl_{\mathbb{Q}}(\Lambda V')= \cl_{\mathbb{Q}}(\Lambda V')+l$, and we consequently have
\begin{eqnarray*}
	\TC_r(\Lambda V)&=&  \zcl_r( \Lambda V')+l\cdot(r-1)\\
	&=&r \cdot \cat( \Lambda V')+l\cdot(r-1)\\
	&=& r\cdot (\cat( \Lambda V)-l)+l\cdot(r-1)\\
	&=& r\cdot \cat( \Lambda V)-l.
\end{eqnarray*}
On the other hand we have
\begin{eqnarray*}
	\chi_{\pi}(\Lambda V)&=&\dim V^{even}-\dim V^{odd}\\
	&=& \dim (V')^{odd}-(\dim (V')^{odd}+l )\\
	&=& -l.
\end{eqnarray*}
We finally conclude that $\TC_r(\Lambda V) = r\cdot \cat(\Lambda V)+\chi_{\pi}(\Lambda V)$.
\end{proof}

\section*{An upper bound of the higher rational topological complexity of certain pure elliptic spaces}
In this section we will extend to the higher topological complexity the upper bounds we obtained in \cite{HRV} and \cite{HRV1} specifically \cite[Theorem 3.1]{HRV}. The first step is given by the following theorem which is the higher version of \cite[Theorem 3.2]{HRV1}. We note that Minowa has very recently generalized Theorem 3.2 of \cite{HRV1} to the context of parametrized higher rational topological complexity \cite{YM}.
\begin{theorem}
Let $(\Lambda V,d)$ be a minimal model and $u$ an element of odd degree such that $du\in \Lambda V$. Then $(\Lambda V\otimes \Lambda u,d)$ is an extension of $(\Lambda V,d)$ satisfying
\begin{itemize}
	\item[(i)]$\TC_r(\Lambda V \otimes \Lambda u)\leq \TC_r(\Lambda V) +r-1 $. 
	\item[(ii)]$\MTC_r(\Lambda V \otimes \Lambda u)\leq \MTC_r(\Lambda V) +r-1$.
	\item[(iii)]$\HTC_r(\Lambda V \otimes \Lambda u)\leq \HTC_r(\Lambda V) +r-1$.
\end{itemize} 
\end{theorem}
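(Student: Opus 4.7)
My plan is to invoke Carrasquel's characterizations of $\TC_r$, $\MTC_r$, and $\HTC_r$ recalled in the Preliminaries, and to base every argument on the algebraic inclusion $J^{m+r} \subseteq I^{m+1}\cdot A$, where $A := (\Lambda V \otimes \Lambda u)^{\otimes r}$, $I := \ker\mu^r_{\Lambda V}$, and $J := \ker\mu^r_{\Lambda V \otimes \Lambda u}$. Under the identification $A \cong (\Lambda V)^{\otimes r} \otimes \Lambda u^{\otimes r}$ one has $\mu^r_{\Lambda V \otimes \Lambda u} = \mu^r_{\Lambda V} \otimes \mu^r_{\Lambda u}$, whence $J = I\cdot A + K\cdot A$ with $K := \ker\mu^r_{\Lambda u}$. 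Since $K$ is generated by the $r-1$ odd-degree elements $u(j)-u(j+1)$, any product of $r$ such generators must repeat by pigeonhole and therefore vanish, so $K^r = 0$. Expanding $(I+K)^{m+r} = \sum_{a+b=m+r} I^a K^b$, every non-vanishing summand satisfies $a \geq m+1$ and thus lies in $I^{m+1}\cdot A$.

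For (i), take a homotopy retraction $\tau:(\Lambda V)^{\otimes r}\otimes\Lambda W \to (\Lambda V)^{\otimes r}$ of $\rho^r_m$ with $m := \TC_r(\Lambda V)$, together with its attached quasi-isomorphism $\xi:(\Lambda V)^{\otimes r}\otimes\Lambda W \xrightarrow{\simeq} (\Lambda V)^{\otimes r}/I^{m+1}$. Tensoring by $\Lambda u^{\otimes r}$ produces $B := A \otimes \Lambda W$ together with the retraction $\tau\otimes\mathrm{id}:B \to A$ and the quasi-isomorphism $\xi\otimes\mathrm{id}:B \xrightarrow{\simeq} A/(I^{m+1}A)$. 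The inclusion $J^{m+r}\subseteq I^{m+1}A$ yields a canonical surjection $\pi:A/J^{m+r} \twoheadrightarrow A/(I^{m+1}A)$, which is a cdga fibration. Form the cdga pullback $P := B\times_{A/(I^{m+1}A)} A/J^{m+r}$: the projection $P \to A/J^{m+r}$ is a weak equivalence (pullback of a weak equivalence along a fibration), while composition with the other projection followed by $\tau\otimes\mathrm{id}$ yields a cdga morphism $P \to A$ extending the identity on $A$. Factoring $A \hookrightarrow P$ as a relative Sullivan cofibration $A \hookrightarrow \tilde P$ followed by an acyclic fibration $\tilde P \xrightarrow{\simeq} P$ and lifting the retraction, we obtain a homotopy retraction of $\rho^r_{m+r-1}$, so Carrasquel's theorem concludes (i).

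Part (ii) is obtained by running the identical pullback construction in the category of $(\Lambda V \otimes \Lambda u)^{\otimes r}$-modules and invoking the module characterization of $\MTC_r$. For (iii), I factor $\rho^r_m \otimes \mathrm{id}_{\Lambda u^{\otimes r}} = \pi \circ \rho^r_{m+r-1}$, so injectivity of the composite in cohomology implies injectivity of $\rho^r_{m+r-1}$; equivalently, one needs $H^*(I^{m+1}A) \to H^*(A)$ to vanish, knowing by hypothesis that $H^*(I^{m+1}) \to H^*((\Lambda V)^{\otimes r})$ vanishes. This should follow from the spectral sequence associated to the (finite, bounded) increasing filtration of $A$ by exterior degree in $u(1), \ldots, u(r)$: $d$ preserves the filtration, on $E_1$ the inclusion $I^{m+1}A \hookrightarrow A$ induces $H^*(I^{m+1}) \otimes \Lambda^\bullet u^{\otimes r} \to H^*((\Lambda V)^{\otimes r}) \otimes \Lambda^\bullet u^{\otimes r}$ which is zero by hypothesis, and boundedness of the filtration promotes this to vanishing on $H^*$.

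The main obstacle is the model-theoretic lifting in the proof of (i): promoting the cdga pullback $P$ to an honest relative Sullivan model $\tilde P$ over $A$ while preserving the retraction. This is standard cofibrant replacement in the Bousfield--Gugenheim model structure on cdgas, but must be executed carefully so that the lifted retraction $\tilde\tau:\tilde P \to A$ still satisfies $\tilde\tau|_A = \mathrm{id}_A$; this is where it becomes crucial that the construction of $P$ is natural over $A$, so that the Sullivan factorization can be chosen compatibly.
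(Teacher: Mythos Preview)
Your argument for parts (i) and (ii) is correct and is essentially the paper's proof in model-categorical clothing. Both rest on the same algebraic core, namely $K^r=0$ and hence $J^{m+r}\subseteq I^{m+1}A$. Where you form the pullback $P=B\times_{A/I^{m+1}A}A/J^{m+r}$ and then take a relative Sullivan replacement $A\hookrightarrow\tilde P\xrightarrow{\simeq}P$, the paper instead first takes a relative Sullivan model $A\hookrightarrow A\otimes\Lambda Z\xrightarrow{\simeq}A/J^{m+r}$ of $\rho^r_{m+r-1}$ and then applies the lifting lemma against the acyclic fibration $\xi\otimes\mathrm{id}$ to obtain $\tilde\tau\colon A\otimes\Lambda Z\to B$; composing with $\tau\otimes\mathrm{id}$ gives the retraction. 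These are two orderings of the same construction. Incidentally, the issue you flag as the ``main obstacle'' is not one: once $A\to P$ is factored as $A\hookrightarrow\tilde P\twoheadrightarrow P$, the desired retraction is simply the composite $\tilde P\to P\to B\xrightarrow{\tau\otimes\mathrm{id}}A$, and this restricts to $\mathrm{id}_A$ automatically because $A\to\tilde P\to P$ agrees with the original $A\to P$ by definition of the factorization. No extra lifting is required.

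There is, however, a genuine gap in your treatment of (iii). You reduce to showing that $H^*(I^{m+1}A)\to H^*(A)$ vanishes and claim this follows from vanishing on the $E_1$-page of the exterior-degree spectral sequence. But vanishing on $E_1$ only yields vanishing on $E_\infty=\mathrm{gr}\,H^*$; for a bounded filtration this says merely that the induced map on $H^*$ strictly lowers filtration by one step, not that it is zero. Concretely, if $\alpha=\sum_S\alpha_S\,u_S\in I^{m+1}A$ is a cocycle, you can kill the top exterior-degree components by subtracting $d\bigl(\sum_{|S|=k}\beta_S u_S\bigr)$ with $d\beta_S=\alpha_S$, but the resulting element picks up terms $\beta_S\cdot d(u_S)$ with $\beta_S\notin I^{m+1}$, so the induction does not close inside $I^{m+1}A$. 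The paper's own proof of (iii) is a one-line sketch (``a linear differential instead of a cdga retraction''); making either approach precise requires checking that a $\mathbb{Q}$-linear chain retraction can be transported along $-\otimes(\Lambda u)^{\otimes r}$ compatibly with the twisted differential $d_A$ (note that $\tau\otimes\mathrm{id}$ is a chain map for $d_A$ only if $\tau$ commutes with multiplication by the elements $(du)(j)$), and this needs more than the spectral-sequence comparison you propose.
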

\begin{proof}
Let $\TC_r(\Lambda V)=n$ then there exists  $\tau$ a  retraction of $i$ making commutative the following diagram
\begin{equation*}
	\xymatrix@=4pc{
		& (\Lambda V)^{\otimes r} \otimes \Lambda W \ar@/_2.5pc/@{-->}[dl]_-{\tau}  \ar[d]^{\xi}_-{\simeq}\\
		(\Lambda V)^{\otimes r} \ar@{^{(}->}[ur]_-{i} \ar[r]_-{\rho ^r_ {n}} & \frac{(\Lambda V)^{\otimes r}}{ (\ker \mu^r _{\Lambda V})^{n+1}}
	}
\end{equation*}  
As $\ker \mu^r_{\Lambda u}$ is generated by $\{u(j)-u(j+1): 1\leq j\leq r-1\}$ then $(\ker \mu^r_{\Lambda u})^r=0$. On the other hand we have $$\ker \mu^r_{\Lambda V\otimes \Lambda u}= \ker\mu^r_{ \Lambda V}\otimes (\Lambda u)^{\otimes r} +(\Lambda V)^{\otimes r} \otimes \ker \mu^r_{\Lambda u} $$
we obtain 
\begin{eqnarray*}
	(\ker \mu^r_{\Lambda V\otimes \Lambda u})^{n+r}&=  & \left(\ker\mu^r_{ \Lambda V}\otimes (\Lambda u)^{\otimes r} +(\Lambda V)^{\otimes r} \otimes \ker \mu^r_{\Lambda u}  \right)^{n+r}\\
	&=& (\ker\mu^r_{ \Lambda V}\otimes (\Lambda u)^{\otimes r})^{<n}\cdot ((\Lambda V)^{\otimes r} \otimes \ker \mu^r_{\Lambda u})^{>r}\\
	&+& (\ker\mu^r_{ \Lambda V}\otimes (\Lambda u)^{\otimes r})^{\geq n+1}\cdot ((\Lambda V)^{\otimes r} \otimes \ker \mu^r_{\Lambda u})^{<r} \\
	&=& (\ker\mu^r_{ \Lambda V}\otimes (\Lambda u)^{\otimes r})^{\geq n+1}\cdot ((\Lambda V)^{\otimes r} \otimes \ker \mu^r_{\Lambda u})^{<r}\\
	&\subseteq & (\ker \mu^r_{\Lambda V})^{n+1}
\end{eqnarray*}

and then the composition $$(\ker \mu^r_{\Lambda V\otimes \Lambda u})^{n+r}\hookrightarrow (\Lambda V\otimes \Lambda u)^{\otimes r}\xrightarrow{\rho^r _{n} \otimes (id_{\Lambda u}) ^{\otimes r}} \frac{(\Lambda V \otimes \Lambda u)^{\otimes r}}{(\ker \mu^r_{\Lambda V})^{n+1}}$$
maps to zero. We consequently get a decomposition of $\rho^r_n \otimes (id_{\Lambda u})^{\otimes r}$ as 
\[
\xymatrix{
	(\Lambda V)^{\otimes r} \otimes (\Lambda u)^{\otimes r}\ar[dr]_-{\rho^r_{n+r-1}} \ar[rr]^{\rho ^r_n \otimes (id_{\Lambda u}) ^{\otimes r}} && \frac{(\Lambda V)^{\otimes r}}{(\ker \mu^r _{\Lambda V})^{n+1}} \otimes (\Lambda u)^{\otimes r}\\
	&\frac{(\Lambda V\otimes \Lambda u)^{\otimes r} }{ (\ker \mu ^r_{\Lambda V \otimes \Lambda u})^{n+r}}. \ar[ru] 
}
\]

Let $(\Lambda V\otimes \Lambda u)^{\otimes r}\hookrightarrow (\Lambda V\otimes \Lambda u)^{\otimes r}\otimes \Lambda Z \xrightarrow{\simeq} \frac{(\Lambda V \otimes \Lambda u)^{\otimes r}}{ (\ker \mu^r _{\Lambda V \otimes \Lambda u})^{n+r}}$ be a relative minimal model of $\rho^r_{n+r-1}:(\Lambda V \otimes \Lambda u)^{\otimes r} \rightarrow \frac{(\Lambda V \otimes \Lambda u)^{\otimes r}}{ (\ker \mu^r _{\Lambda V \otimes \Lambda u})^{n+r}}$. In addition, by taking into consideration the diagram:  
\begin{equation*}
	\xymatrix@=4pc{
		(\Lambda V)^{\otimes r} \otimes  (\Lambda u)^{\otimes r}  \ar[r]^{i\otimes (id_{\Lambda u})^{\otimes r}} \ar@{->}[dr]^{\quad\rho_n^r \otimes (id_{\Lambda u})^{\otimes r}} &  (\Lambda V)^{\otimes r} \otimes  \Lambda W \otimes (\Lambda u)^{\otimes r} \ar[d]^{\xi \otimes (id_{\Lambda u})^{\otimes r}}_-{\simeq}\\
		& \frac{(\Lambda V)^{\otimes r}}{ (\ker \mu^r _{\Lambda V})^{n+1}} \otimes (\Lambda u)^{\otimes r} 	}
\end{equation*}  
we obtain the following commutative square
\[
\xymatrixcolsep{0.02pc}\xymatrix{
		(\Lambda V)^{\otimes r}  \otimes (\Lambda u)^{\otimes r}
		\ar@{^{(}->}[d] \ar[rrr]^{i\otimes (id_{\Lambda u})^{\otimes r}}  &&&((\Lambda V)^{\otimes r} \otimes \Lambda W \otimes(\Lambda u)^{\otimes r}) \ar[d]^{\xi \otimes (id_{\Lambda u})^{\otimes r}} _-{\simeq}
		\\
		((\Lambda V \otimes \Lambda u)^{\otimes r}  )\otimes \Lambda Z \ar@{-->}[urrr]^{\tilde{\tau}}  \ar[rr] && \frac{(\Lambda V \otimes \Lambda u )^{\otimes r}}{(\ker \mu ^r_{\Lambda V \otimes \Lambda u})^{n+r}} \ar[r] & \frac{(\Lambda V )^{\otimes r}}{(\ker \mu^r _{\Lambda V})^{n+1}} \otimes (\Lambda u)^{\otimes r}}
		\]
and according to the lifting lemma we ensure the existence of $\tilde{\tau}$ making commutative the previous diagram. Now we can check that the composition $(\tau \otimes (id _{\Lambda u})^{\otimes r} )\circ \tilde{\tau}$ represents a homotopoy retraction for $\rho^r_{n+r-1}:(\Lambda V \otimes \Lambda u)^{\otimes r} \rightarrow \frac{(\Lambda V \otimes \Lambda u)^{\otimes r}}{ (\ker \mu^r _{\Lambda V \otimes \Lambda u})^{n+r}}$. We finally conclude that $\TC_r(\Lambda V\otimes \Lambda u) \leq n+r-1=\TC_r(\Lambda V)+r-1$.\\
For $(i)$ and $(ii)$, we proceed similarly as in $(i)$ by considering respectively a $(\Lambda V)^{\otimes r}$-retraction and a linear differential instead of (cdga) homotopy retraction.

\end{proof}
By applying the previous theorem several times we obtain the following corollary. 
\begin{corollary} \label{cor10}
	Let $(\Lambda V,d)$ be a minimal model and $U$ is a graded vector space concentrated in odd degrees such that $dU \subset \Lambda V$. Then
	\begin{itemize}
		\item[(i)] $\TC_r(\Lambda V \otimes \Lambda U) \leq \TC_r(\Lambda V ) +(r-1)\cdot \dim U$.
		\item[(ii)] $\MTC_r(\Lambda V \otimes \Lambda U) \leq \MTC_r(\Lambda V ) +(r-1)\cdot\dim U.$
		\item[(iii)] $\HTC_r(\Lambda V \otimes \Lambda U) \leq \HTC_r(\Lambda V ) +(r-1)\cdot\dim U$.  
	\end{itemize}
\end{corollary}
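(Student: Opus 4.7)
The plan is to obtain Corollary \ref{cor10} from the preceding theorem by a straightforward induction on $\dim U$. Fix a homogeneous basis $u_1, \ldots, u_k$ of $U$, where $k = \dim U$, and write $U_i = \mathrm{span}(u_1,\ldots,u_i)$ with $U_0 = 0$. At each stage we have the identification $(\Lambda V \otimes \Lambda U_i, d) = (\Lambda V \otimes \Lambda U_{i-1}, d) \otimes (\Lambda u_i, d)$, where $u_i$ is of odd degree. The key observation that makes the inductive step legal is that the hypothesis $dU \subset \Lambda V$ gives, in particular, $du_i \in \Lambda V \subset \Lambda V \otimes \Lambda U_{i-1}$, so the previous theorem applies to the extension $(\Lambda V \otimes \Lambda U_{i-1}, d) \hookrightarrow (\Lambda V \otimes \Lambda U_i, d)$.

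First I would settle the base case $i = 0$, which is trivial since $\Lambda V \otimes \Lambda U_0 = \Lambda V$. For the inductive step, assuming the three inequalities of the corollary hold with $U_{i-1}$ in place of $U$, I would apply parts (i), (ii), (iii) of the preceding theorem to the pair $(\Lambda V \otimes \Lambda U_{i-1}, d)$ and the odd generator $u_i$ to obtain
\[
\TC_r(\Lambda V \otimes \Lambda U_i) \leq \TC_r(\Lambda V \otimes \Lambda U_{i-1}) + (r-1),
\]
and analogous inequalities for $\MTC_r$ and $\HTC_r$. Chaining this with the inductive hypothesis then yields the bound with $(r-1) \cdot i$ on the right, and after $k$ steps we recover exactly the three claimed inequalities.

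There is essentially no obstacle here beyond bookkeeping: the only point one has to verify carefully is that the hypothesis of the previous theorem, namely $du_i \in \Lambda V \otimes \Lambda U_{i-1}$, is indeed preserved at each stage, and this follows at once from the stronger hypothesis $dU \subset \Lambda V$ which sits at the bottom of every extension in the tower. The argument is uniform across (i), (ii), and (iii) because the corresponding inequality in the previous theorem is proved uniformly (passing from a cdga retraction to a $(\Lambda V)^{\otimes r}$-module retraction or to cohomological injectivity), so no separate treatment of $\MTC_r$ and $\HTC_r$ is needed.
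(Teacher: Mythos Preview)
Your proposal is correct and follows exactly the approach the paper takes: the paper simply states that the corollary follows ``by applying the previous theorem several times,'' which is precisely the induction on $\dim U$ you spell out. Your version is a more detailed write-up of the same one-line argument.
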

We finally establish the following upper bound. 
\begin{theorem}\label{th10}
Let $(\Lambda V,d)$ be a pure elliptic minimal model together with a differential of constant length. Then 
$$\TC_r(\Lambda V)\leq r\cdot \cat(\Lambda V)+\chi_{\pi}(\Lambda V).$$ 
\end{theorem}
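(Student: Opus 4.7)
The plan is to reduce to the $F_0$-case via the odd-extension bound, mimicking the $r=2$ argument of \cite[Theorem~3.1]{HRV}. The first step is to decompose $(\Lambda V,d)$ into an $F_0$-sub-cdga and a collection of extra odd generators. By Halperin's characterization of pure ellipticity, one can choose a homogeneous basis $y_1,\dots,y_m$ of $V^{odd}$ such that $dy_1,\dots,dy_n$, with $n=\dim V^{even}$, form a regular sequence in $\Lambda V^{even}$. Setting $V_0^{odd}=\langle y_1,\dots,y_n\rangle$, $U=\langle y_{n+1},\dots,y_m\rangle$, and $V_0=V^{even}\oplus V_0^{odd}$, the sub-cdga $(\Lambda V_0,d)$ is a pure elliptic model with $\chi_\pi(\Lambda V_0)=0$, that is, an $F_0$-model, and $\dim U=m-n=-\chi_\pi(\Lambda V)$. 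Purity gives $dU\subset \Lambda V^{even}\subset\Lambda V_0$, hence $(\Lambda V,d)=(\Lambda V_0\otimes\Lambda U,d)$ falls under the hypothesis of Corollary~\ref{cor10}.

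Next, Corollary~\ref{cor10}(i) combined with Lemma~\ref{lem6} yields
\[ \TC_r(\Lambda V)\leq \TC_r(\Lambda V_0)+(r-1)\dim U=r\cdot\cat(\Lambda V_0)+(r-1)\dim U. \]
To conclude, it suffices to establish the LS-category inequality $\cat(\Lambda V_0)+\dim U\leq\cat(\Lambda V)$; substituting then gives $\TC_r(\Lambda V)\leq r\cdot\cat(\Lambda V)-\dim U=r\cdot\cat(\Lambda V)+\chi_\pi(\Lambda V)$, which is the desired bound.

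The main obstacle is precisely this LS-category inequality, and this is where the constant-length hypothesis is essential. Since every element of $dU$ lies in $\Lambda^l V^{even}$ for a single fixed $l\geq 2$, the added odd generators in $U$ sit deep enough in the augmentation filtration of $(\Lambda V_0\otimes \Lambda U,d)$ to contribute to $\cat$ exactly as closed odd generators would, mirroring the picture of the formal case in Theorem~\ref{th8} where $U$ may be taken with trivial differential. This LS-category identity for pure elliptic minimal models with constant-length differential is the content of \cite[Theorem~3.1]{HRV} at the level of $\cat$, and it transfers without change to the higher-$r$ setting since $\cat$ does not depend on $r$.
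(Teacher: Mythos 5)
Your strategy coincides with the paper's: decompose $(\Lambda V,d)$ as $(\Lambda V_0\otimes\Lambda U,d)$ with $(\Lambda V_0,d)$ an $F_0$-sub-model carrying $V^{even}$ (in the paper this is \cite[Theorem~1.2]{HRV}), then apply Corollary~\ref{cor10} and Lemma~\ref{lem6}, and finally convert $\cat(\Lambda V_0)$ into $\cat(\Lambda V)$. The reduction to the inequality $\cat(\Lambda V_0)+\dim U\leq\cat(\Lambda V)$ is exactly what the paper does, so the overall route is the same.

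However, the justification you give for that $\cat$-level step is not adequate. It is not ``the content of \cite[Theorem~3.1]{HRV} at the level of $\cat$'' --- that theorem is an inequality for $\TC$, not an identity for $\cat$, and reading a $\cat$ identity off it is circular since its proof relies on the same fact. Nor can it be deduced from the heuristic that the odd generators of $U$ ``sit deep enough in the augmentation filtration to contribute as closed generators'': for a general fibration one only has $\cat(E)\leq\cat(B)+\cat(F)$, whereas you need the reverse inequality. The correct ingredient is the Lechuga--Murillo formula \cite{LM2}: for an elliptic pure model with constant-length-$l$ differential, $\cat=\dim(\cdot)^{odd}+(l-2)\dim(\cdot)^{even}$. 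Applying this to both $\Lambda V_0$ (whose differential inherits constant length $l$, with $\dim V_0^{odd}=\dim V^{even}$) and $\Lambda V$ yields $\cat(\Lambda V_0)+\dim U=\cat(\Lambda V)$ exactly, since $\dim U=\dim V^{odd}-\dim V^{even}$. This is precisely where the constant-length hypothesis enters, and it is the computation the paper performs explicitly; your write-up leaves that step unproven.
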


\begin{proof}
By a structure theorem \cite[Theorem 1.2]{HRV}  there exists an extension $(\Lambda Z,d)\hookrightarrow (\Lambda V,d)$ where $(\Lambda Z,d) $ is an $F_0$-model satisfying $Z^{even}=V^{even}$. Here, $(\Lambda V,d)$ can be decomposed as $\Lambda V=\Lambda Z\otimes \Lambda U$ where $U$ is a graded vector subspace concentrated in odd degrees and by Corollary \ref{cor10} we have \begin{eqnarray*}
\TC_r(\Lambda V)&=&\TC_r( \Lambda Z\otimes \Lambda U)\\
&\leq& \TC_r( \Lambda Z)+ (r-1) \cdot \dim U.
\end{eqnarray*} 
Moreover $(\Lambda Z,d)$ is an $F_0-$model then $\TC_r(\Lambda Z)=r\cdot \cat(\Lambda Z)$ (see Theorem \ref{th8}). By \cite{LM2} we have $ \cat(\Lambda Z)=\dim Z^{odd}+(l-2)\cdot\dim Z^{even}$ where $l$ is the length of the differential $d$. We then have
\begin{eqnarray*}
	\TC_r(\Lambda V) &\leq&  r\cdot \left(\dim Z^{odd}+(l-2)\cdot \dim Z^{even}\right) +(r-1)\cdot\dim U\\
	&\leq& r\cdot \left(\dim V^{odd}-\dim U+(l-2)\cdot\dim V^{even}\right) +(r-1)\cdot \dim U\\
	&\leq& r\cdot \left(\dim V^{odd}+(l-2)\cdot \dim V^{even}\right) -r\cdot \dim U +(r-1)\cdot\dim U\\
	&\leq &r\cdot \cat(\Lambda V) -\dim U.
\end{eqnarray*} 
On the other hand we have 
\begin{eqnarray*}
\chi_{\pi}(\Lambda V)&=&\dim V^{even}-\dim V^{odd}\\
&=& \dim Z^{odd}-(\dim Z^{odd}+\dim U )\\
&=& -\dim U.
\end{eqnarray*}
Consequently we conclude that $\TC_r(\Lambda V) \leq r\cdot\cat(\Lambda V)+\chi_{\pi}(\Lambda V)$.
\end{proof} 
\begin{example}
	Let consider the homogeneous space $\frac{SU(6)}{SU(3)\times SU(3)}$ whose minimal model is given by $(\Lambda V,d)=\Lambda(x_1,x_2,y_1,y_2,z)$ where $|x_1|=4,$ $|x_2|=6,$ $dx_1=0,$ $dx_2=0,$ $dy_1=x_1^2,$ $dy_2=x_2^2$ and $dz=x_1x_2$. For this example we have $\cat(\Lambda V)=\dim V^{odd}=3$ since $d$ is quadratic and $\chi_{\pi}(\Lambda V)=-1$ which implies $\TC_r(\Lambda V)\leq 3r-1.$
\end{example}
\section*{The coformal case}
In this section we study the rational higher topological complexity of pure elliptic coformal spaces. We first recall the length $L(\Lambda V, \mathcal{B})$ associated to a (non-necessarily elliptic) pure coformal model $(\Lambda V,d)$ given with a basis $\mathcal{B}=\{x_1,\cdots,x_n\}$ of $X:=V^{even}$. We extend $(\Lambda V,d)$ to a coformal pure elliptic model $(\Lambda W _\mathcal{B}=\Lambda (V\oplus U),d)$ by adding generators $u_i$ of odd degrees satisfying $du_i=x_i^2$ for each $i=1,\cdots,n$. Since the differential $d$ is quadratic, $d$ can splits into bigraded operators $$d_{p,q}: \Lambda^p X\otimes \Lambda^q (Y\oplus U) \rightarrow \Lambda ^{p+2}X\otimes \Lambda^{q-1} (Y\oplus U)$$ 
which induce a bigraded cohomology $H_{p,q}(\Lambda W_{\mathcal{B}})$. In \cite{HRV1} we have introduced the following length $$ L(\Lambda V, \mathcal{B})=\max\{i: [z_1]\cdots [z_i]\neq 0,\quad [z_1],\cdots, [z_i]\in H_{odd,*}(\Lambda W_{\mathcal{B}}) \}.$$
This notion permits us to improve the general lower bound of the higher rational topological complexity given by $(r-1)\cdot \cat(\Lambda V)$.
\begin{theorem} \label{theo.13}
	Let $(\Lambda V,d)$ be a pure coformal elliptic minimal model. Then $$(r-1)\cdot \cat(\Lambda V)+ L(\Lambda V,\mathcal{B})\leq \TC_r(\Lambda V).$$ 
\end{theorem}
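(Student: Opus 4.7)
The plan is a short induction on $r$, combining two ingredients: a base case at $r = 2$ coming from \cite{HRV1}, and the inductive step $\cat(\Lambda V) + \MTC_r(\Lambda V) \leq \MTC_{r+1}(\Lambda V)$ announced in the introduction as the fourth theorem of this paper.

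For the base case, recall from \cite{HRV1} that for any pure coformal elliptic minimal model one has $\cat(\Lambda V) + L(\Lambda V, \mathcal{B}) \leq \MTC(\Lambda V) = \MTC_2(\Lambda V)$. This is the classical ($r = 2$) analogue of our target inequality, but stated for the module invariant $\MTC$, which is a priori stronger than the $\TC$ statement and is exactly what is needed to feed into the inductive machinery.

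Iterating the inductive step starting from this base case yields, for every $r \geq 2$,
$$\MTC_r(\Lambda V) \geq (r-1) \cdot \cat(\Lambda V) + L(\Lambda V, \mathcal{B}).$$
Combining with the standard comparison $\MTC_r(\Lambda V) \leq \TC_r(\Lambda V)$ recalled in the preliminaries produces the desired bound $(r-1)\cdot \cat(\Lambda V) + L(\Lambda V, \mathcal{B}) \leq \TC_r(\Lambda V)$.

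The main obstacle is not this induction, which is purely formal once both ingredients are assembled, but rather establishing the inductive step itself. There the coformality hypothesis enters decisively, through the bigraded splitting of the quadratic differential and the auxiliary extension $(\Lambda W_{\mathcal{B}}, d)$ used in the very definition of $L(\Lambda V, \mathcal{B})$; the passage from $\TC$ to the intermediate invariant $\MTC$ is also essential, since the inductive step is not known for $\TC$ directly and this is precisely what forces us to carry out the iteration at the level of $\MTC_r$ and only pass to $\TC_r$ at the end.
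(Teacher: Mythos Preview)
Your proposal is correct and follows essentially the same route as the paper: both arguments feed the $r=2$ lower bound from \cite{HRV1} into the iteration $\cat(\Lambda V)+\MTC_r(\Lambda V)\leq \MTC_{r+1}(\Lambda V)$ (stated here as Corollary~\ref{cor15}) and then pass from $\MTC_r$ to $\TC_r$. The only cosmetic difference is that the paper quotes from \cite{HRV1} the intermediate inequality $\cat(\Lambda W_{\mathcal{B}})+L(\Lambda V,\mathcal{B})\leq \MTC(\Lambda W_{\mathcal{B}})$, applies the iteration to the auxiliary model $\Lambda W_{\mathcal{B}}$, and only afterwards transfers back to $\Lambda V$ via Corollary~\ref{cor10}(ii) and the identity $\cat(\Lambda W_{\mathcal{B}})=\cat(\Lambda V)+n$; you instead take the already-reduced inequality $\cat(\Lambda V)+L(\Lambda V,\mathcal{B})\leq \MTC(\Lambda V)$ as the base case and iterate directly on $\Lambda V$, which is a slightly cleaner packaging of the same computation.
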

Before to prove this result, let first construct of a cocycle $\Omega$ which will be used later. Let as before consider a basis $\{x_1,\cdots,x_n\}$ of $X=V^{even}$ and let $\{y_1,\cdots,y_m\}$ be a basis of $Y:=V^{odd}$. Consider $\Lambda V \otimes \Lambda \bar{X}\otimes \Lambda V' \otimes \Lambda \bar{X}'$ where $\Lambda V'$ is a copy of $\Lambda V$ and $\bar{X}=sX$ is the
suspension of $X$ with $d$ extended by $d\bar{x}=dx, \forall x\in X $. We adopt $v$ and $v'$, respectively, to reefer to $v\otimes 1$ and $1\otimes v$ for all $v\in \Lambda V$. As the differential $d$ is quadratic, then for any $j=1,\cdots,m$ there exist $\alpha ^j _k , \beta ^j_{p,q} \in \mathbb{Q}$ satisfying $dy_j= \sum _k  \alpha ^j _k x^2_k+ \sum _{p<q} \beta ^j _{p,q}x_{p} \cdot x_{q}$. In $\Lambda V \otimes \Lambda \bar{X}\otimes \Lambda V' \otimes \Lambda \bar{X}'$ we have
\begin{eqnarray*}
	d(y_j-y_j')&=& \sum _k \alpha ^j _k(x^2_{k}- {x'_{k}}^2) + \sum _{p< q} \beta ^j _{p,q}(x_{p} x_{q} -x'_{p} x'_{q}) \\
	&=&d \left( \sum _k \alpha ^j _k (x_{k}- x'_{k})(\bar{ x }_{k} + \bar{ x}'_{k} ) +\frac{1}{2} \sum _{p< q}  \beta ^j _{p,q}(x_{p}- x '_{p})(\bar{ x }_{q} + \bar{ x}'_{q} )\right.\\
	&+& \left. \frac{1}{2} \sum _{p< q}  \beta ^j _{p,q}(x_{q}- x '_{q})(\bar{ x }_{p} + \bar{x}'_{p} )\right), \quad \text{for all} \quad j=1,\cdots ,m.
\end{eqnarray*}
By setting
$$\phi _j=  \sum _k \alpha ^j _k (x_{k}- x'_{k})(\bar{ x }_{k} + \bar{ x}'_{k} ) +\frac{1}{2} \sum _{p< q}  \beta ^j _{p,q}(x_{p}- x'_{p})(\bar{ x }_{q} + \bar{ x}'_{q} )+\frac{1}{2} \sum _{p< q}  \beta ^j _{p,q}(x_{q}- x'_{q})(\bar{ x }_{p} + \bar{ x}'_{p} )$$
we obtain that, the elements
\begin{itemize}
	\item[•] $y_j -y'_j- \phi _j$, with $j=1, \cdots , m$
\end{itemize}
are all cocycles in $\Lambda V   \otimes \Lambda \bar{X}\otimes \Lambda V '\otimes \Lambda \bar{X}'$.\\ 
The cocycle $\Omega$ is defined to be the coefficient of $ \prod \limits _{i=1}^n( \bar{x}_i+\bar{x}_i')$ gotten after developing 
$$\prod _{j=1}^m \left(y_j-y'_j -\phi _j\right).$$
Note that $\Omega\in (\ker \mu _{\Lambda V})^m$. Now let consider the following projection $$ pr: \Lambda V \otimes \Lambda \bar{X} \otimes \Lambda V' \otimes\Lambda \bar{X}'\rightarrow \Lambda V' \otimes\Lambda \bar{X}'$$ we can check that
$$pr(\prod \limits _{i=1}^n( \bar{x}_i+\bar{x}_i')) =\prod \limits _{i=1}^n\bar{x}_i', \text{ and } pr(\prod _{j=1}^m \left(y_j-y'_j -\phi _j\right))=(-1)^m\prod _{j=1}^m \left(y'_j-\varphi _j'\right)$$
where $\varphi_j'=pr(-\phi_j)= \sum _k  \alpha ^j _k x'_k\cdot \bar{ x}'_k+\frac{1}{2} \sum _{p<q} \beta ^j _{p,q} {x}'_p\cdot \bar{x}'_{q} +\frac{1}{2} \sum _{p<q} \beta ^j _{p,q}{x}'_{q}\cdot \bar{x}'_p$. Consequently, the coefficient of $ \prod \limits _{i=1}^n( \bar{x}_i+\bar{x}_i')$ gotten after developing $\prod _{j=1}^m \left(y_j-y'_j -\phi _j\right)$ is projected through the map $pr$ to the coefficient of $ \prod \limits _{i=1}^n\bar{x}_i'$ gotten after developing 
$(-1)^m\cdot \prod _{j=1}^m (y'_j -\varphi _j').$ That is $pr(\Omega)=(-1)^m\omega'$ where $\omega'$ is a cocycle in $\Lambda^m V'$ whose construction corresponds to the  Lechuga-Murillo construction of a cocycle representing the top class of $\Lambda V'$ (see \cite{LM2}). In particular the cohomology class of $\omega'$ is non-trivial. As a second step, let consider the decomposition $$\Lambda^{\leq m}( V\oplus V')=(\Lambda^{m} V\otimes 1) \oplus ( \bigoplus_{h,l}\Lambda^{h} V\otimes \Lambda^{l} V') \oplus (1\otimes \Lambda^{m} V')$$ 
where $h, l \in \mathbb{N}$ satisfying $h+l\leq m$. Then we can decompose $\Omega$ as $$\Omega=\delta+(-1)^m\omega'$$ 
with $\delta \in (\Lambda^{m} V\otimes 1) \oplus (\bigoplus_{h,l}\Lambda^{h} V\otimes \Lambda^{l} V')$ and $\omega' \in 1\otimes \Lambda^{m} V'$. The construction of $\omega'$ and $\Omega$ in combination with a technique similar to that of \cite[Lemma 4.1]{KY} permits us to establish the following result. Note that since $(\Lambda V,d)$ is coformal and elliptic $\cat(\Lambda V)=\dim V^{odd}=m$ (\cite{FH}). 
\begin{theorem}
	For $(\Lambda V,d)$ pure coformal elliptic, we have
	$$\cat(\Lambda V)+ \MTC_r(\Lambda V)\leq \MTC_{r+1}(\Lambda V) .$$
\end{theorem}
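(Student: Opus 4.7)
The plan is to adapt the argument of Lemma 4.1 of Kishimoto--Yamaguchi \cite{KY} to the module setting, using the cocycle $\Omega$ built in the preamble above as a ``top-class lift''. Throughout I exploit the F\'elix--Halperin identity $\cat(\Lambda V) = \dim V^{odd} =: m$, valid for coformal elliptic models by \cite{FH}. I will prove the contrapositive: starting from a non-trivial obstruction witnessing $\MTC_r(\Lambda V) = N$, produce one witnessing $\MTC_{r+1}(\Lambda V)\geq N+m$ by multiplying with a higher analogue of $\Omega$.

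First I would promote $\Omega$ to an element $\Omega^{(r)} \in (\ker\mu^{r+1}_{\Lambda V})^m$ by iterating the Lechuga--Murillo construction of the excerpt between each pair of consecutive tensor factors: for $k=1,\dots,r$ introduce a copy $\Lambda\bar{X}_k$ of the suspension of $V^{even}$, form the cocycles $y_j(k) - y_j(k+1) - \phi_j^{(k)}$, and extract $\Omega^{(r)}$ as the coefficient of $\prod_{k,i}\bigl(\bar{x}_i(k)+\bar{x}_i(k+1)\bigr)$ in their product. The analogue of the splitting $\Omega = \delta + (-1)^m\omega'$ then holds, with $\omega'$ the top-class cocycle of $\Lambda V$ localized in a single tensor factor and $\delta^{(r)}$ contributing only to lower filtration.

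Given an obstruction class $[\alpha]$ of $(\ker\mu^r_{\Lambda V})$-filtration $N$ witnessing $\MTC_r(\Lambda V)=N$, I embed it into the $(r+1)$-fold setting via the natural inclusion $(\Lambda V)^{\otimes r}\hookrightarrow(\Lambda V)^{\otimes(r+1)}$ (inserting a unit in one slot) and multiply by $\Omega^{(r)}$. The resulting class lies in $(\ker\mu^{r+1}_{\Lambda V})^{N+m}$; the splitting of $\Omega^{(r)}$ projects it, on the slot carrying $\omega'$, to $\pm[\alpha]\cdot[\omega']$, which is non-zero because $[\omega']$ represents the top cohomology class of $\Lambda V$ and $[\alpha]\neq 0$. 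Invoking the lifting lemma, analogous to the construction of $\tilde{\tau}$ in the theorem preceding Corollary \ref{cor10}, promotes this cohomology-level obstruction to a module-level obstruction for $\rho^{r+1}_{N+m-1}$, yielding $\MTC_{r+1}(\Lambda V)\geq N+m$.

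The main obstacle is the passage from cohomological non-triviality to a genuine $(\Lambda V)^{\otimes(r+1)}$-module obstruction, since $\MTC_{r+1}$ is defined by module homotopy retractions rather than by cohomological surjectivity. The splitting $\Omega^{(r)} = \delta^{(r)} + (\pm)\,\omega'$ is decisive here: the $\omega'$-summand provides a genuine module generator in the relative semifree model, while $\delta^{(r)}$ is absorbed into the lower-filtration ideal. A secondary technical point is checking that the iterated Lechuga--Murillo construction actually yields a cocycle in $(\ker\mu^{r+1})^m$; this is where the coformal (quadratic-differential) hypothesis is essential, so that each $d(y_j(k)-y_j(k+1))$ can be rewritten as $d\phi_j^{(k)}$ with $\phi_j^{(k)}$ of the required filtration.
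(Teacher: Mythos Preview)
Your proposal has a genuine gap at exactly the point you flag as ``the main obstacle'': the passage from cohomological non-triviality to a module-level obstruction. The failure of $\rho^r_{N-1}$ to admit a $(\Lambda V)^{\otimes r}$-module homotopy retraction is \emph{not} witnessed by a single cohomology class $[\alpha]$; that characterization is the definition of $\HTC_r$, not of $\MTC_r$. As written, your argument (multiply a class by $\Omega^{(r)}$ and check that the product is non-zero in cohomology via the $\omega'$-summand and K\"unneth) would at best yield $\cat(\Lambda V)+\HTC_r(\Lambda V)\leq \HTC_{r+1}(\Lambda V)$. The lifting lemma does not convert a non-vanishing cohomology class into the absence of a module retraction---it produces maps, not obstructions---and your assertion that ``the $\omega'$-summand provides a genuine module generator in the relative semifree model'' is precisely the content to be proved, not a justification.

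The paper avoids this difficulty by arguing in the \emph{direct} direction, transferring retractions rather than obstructions. One writes $\MTC_{r+1}(\Lambda V)=m+p$ and, from the given $(\Lambda V)^{\otimes r+1}$-module retraction $\tau$ of $\rho^{r+1}_{m+p}$, manufactures a $(\Lambda V)^{\otimes r}$-module retraction of $\rho^r_p$. No iterated $\Omega^{(r)}$ is needed: the original $\Omega\in(\ker\mu_{\Lambda V})^m$, placed in the last two tensor factors, suffices. The splitting $\Omega=\delta+(-1)^m\omega'$ is used not to detect an obstruction but to build an explicit $(\Lambda V)^{\otimes r}$-module \emph{retraction} $I_r=id\otimes I:(\Lambda V)^{\otimes r+1}\to(\Lambda V)^{\otimes r}$ of the multiplication-by-$\Omega$ map $f$, where $I:\Lambda V\otimes\Lambda V\to\Lambda V$ sends $1\otimes\omega\mapsto 1$ and kills a complement of $\mathbb{Q}\cdot\omega$. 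Since $f\bigl((\ker\mu^r_{\Lambda V})^{p+1}\bigr)\subset(\ker\mu^{r+1}_{\Lambda V})^{m+p+1}$, $f$ descends to $\tilde f$ on the quotients, and $I_r\circ\tau\circ\tilde f$ is the desired retraction of $\rho^r_p$, giving $\MTC_r(\Lambda V)\leq p$ directly.
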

\begin{proof}
	Let $\omega$ be the cocycle constructed above. We decompose $\Lambda V$ as $$ \Lambda V=\mathbb{Q}\cdot \omega\oplus Z$$ 
	and consider $I$ the following map $$ I: \Lambda V \otimes \Lambda V \rightarrow \Lambda V$$
	defined by $I(1\otimes \omega )= 1$ and $I( \Lambda V \otimes Z)=0$. Since the cohomology class of $\omega$ is non-trivial, $I$ is a morphism of differential $\Lambda V-$modules. From the construction of $\Omega \in (ker\mu _{\Lambda V})^{m}$, we can chek that $I(\Omega)=(-1)^m$, and without loss of generality, we can modify the definition of $I$ to obtain $I(\Omega )= 1$ and $I( \Lambda V\otimes Z)=0$. We subsequently define a morphism of $(\Lambda V)^{\otimes r}$-modules as  
	$$ I_r:(\Lambda V)^{\otimes r+1} = (\Lambda V)^{\otimes r-1}\otimes \Lambda V\otimes \Lambda V \xrightarrow{id\otimes I} (\Lambda V)^{\otimes r-1} \otimes \Lambda V $$
	which commutes with the differential $d$. On the other hand, as $\Omega$ is a cocycle, we can define the following morphism of $(\Lambda V)^{\otimes r}-$modules 
	$$ f: (\Lambda V)^{\otimes r}\rightarrow (\Lambda V )^{\otimes r+1}$$
	given by $f(x)=x\cdot \Omega$ (Here we identified $\Omega= 1_{ (\Lambda V)^{\otimes r-1}} \otimes \Omega\in (\Lambda V )^{\otimes r+1}$ and $x=x\otimes 1 \in (\Lambda V )^{\otimes r+1}$ for all $x \in (\Lambda V)^{\otimes r}$ ). Note that the morphism  $I_r$ represents a retraction of $(\Lambda V)^{\otimes r}$-modules for $f$. We know that $\MTC_{r+1}(\Lambda V)\geq cat(\Lambda V)=m$. Consequently there exists an integer $p$ satisfying
	$\MTC_{r+1}(\Lambda V)=p+cat(\Lambda V)=p+m$. As $\Omega \in \ker (\mu _{\Lambda V})^{m}\subset \ker (\mu^r _{\Lambda V})^{m} \subset \ker (\mu^{r+1} _{\Lambda V})^{m}$ then any $x \in \ker (\mu^r _{\Lambda V})^{p+1}$ maps through $f$ to $f(x)=x\cdot\Omega \in \ker (\mu^{r+1} _{\Lambda V})^{m+p+1}$ and consequently we obtain the following commutative diagram of differential $(\Lambda V)^{\otimes r}-$modules

	\begin{equation*}
		\xymatrix@=4pc{
			(\Lambda V)^{\otimes r} \ar[d]_-{\rho^{r}_{p}}\ar[r]^f & (\Lambda V)^{\otimes r+1} \ar[d] ^{\rho^{r+1}_{m+p}}\\
			\frac{(\Lambda V)^{\otimes r}}{ (\ker \mu^r _{\Lambda V})^{p+1}} \ar[r]^{\tilde{f}} & \frac{(\Lambda V)^{\otimes r+1}}{ (\ker \mu^{r+1} _{\Lambda V})^{m+p+1}}
		}
	\end{equation*}  
	Since $\MTC_{r+1}(\Lambda V)=m+p$ then $\rho^{r+1}_{m+p}$ admits $\tau$ as a homotopy retraction of differential $(\Lambda V)^{\otimes r}-$modules and then $I_r\circ\tau\circ \tilde{f}$ is a homotopy retraction of differential $(\Lambda V)^{\otimes r}-$modules for $\rho_p^r$. This means that $\MTC_{r}(\Lambda V)\leq p= \MTC_{r+1}(\Lambda V)-\cat(\Lambda V)$ and consequently $\MTC_{r}(\Lambda V)+\cat(\Lambda V)\leq  \MTC_{r+1}(\Lambda V).$
\end{proof}
By an induction process, we obtain the following corollary: 
\begin{corollary}\label{cor15} If $(\Lambda V,d)$ is pure coformal and elliptic, then
	$$ (r-2)\cdot \cat(\Lambda V)+\MTC(\Lambda V)\leq \MTC_r(\Lambda V)$$
	where $\MTC(V)$ is the usual modular rational topological complexity of $\Lambda V$.
\end{corollary}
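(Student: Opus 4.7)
The statement is phrased as an immediate consequence of the preceding theorem ``by an induction process'', so the natural plan is a straightforward induction on $r \geq 2$, using $\cat(\Lambda V) + \MTC_r(\Lambda V) \leq \MTC_{r+1}(\Lambda V)$ (the theorem just proved) as the inductive step.

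For the base case $r = 2$, the inequality reduces to
$$0 \cdot \cat(\Lambda V) + \MTC(\Lambda V) \leq \MTC_2(\Lambda V),$$
which is an equality by the convention $\MTC_2(\Lambda V) = \MTC(\Lambda V)$ (recalled just before Lemma~\ref{lem6} and implicit in Carrasquel's characterization). So the base case is trivial.

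For the inductive step, assume the inequality holds at level $r$, i.e.\ $(r-2) \cdot \cat(\Lambda V) + \MTC(\Lambda V) \leq \MTC_r(\Lambda V)$. Adding $\cat(\Lambda V)$ to both sides gives
$$(r-1)\cdot \cat(\Lambda V) + \MTC(\Lambda V) \leq \cat(\Lambda V) + \MTC_r(\Lambda V),$$
and the previous theorem provides $\cat(\Lambda V) + \MTC_r(\Lambda V) \leq \MTC_{r+1}(\Lambda V)$. Chaining these two inequalities yields $((r+1) - 2) \cdot \cat(\Lambda V) + \MTC(\Lambda V) \leq \MTC_{r+1}(\Lambda V)$, which is precisely the statement at level $r+1$.

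There is no real obstacle here: the content of the result lies entirely in the theorem that precedes it, and the corollary is a formal telescoping. The only thing to double-check is that the base case uses the right normalization ($\MTC_2 = \MTC$), so that the $(r-2)$ coefficient is consistent with iterating the increment $\cat(\Lambda V)$ exactly $r-2$ times starting from $\MTC_2(\Lambda V) = \MTC(\Lambda V)$.
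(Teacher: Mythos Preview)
Your proof is correct and matches the paper's approach exactly: the paper states this corollary follows from the preceding theorem ``by an induction process'', and your argument is precisely that induction on $r\geq 2$ with trivial base case $\MTC_2(\Lambda V)=\MTC(\Lambda V)$ and inductive step given by the theorem.
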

As an application of the previous theorem and combining with the third axiom of \cite[Lemma 1]{FKS}, we obtain
\begin{corollary}For $(\Lambda V,d)$ pure coformal elliptic, the generating function $$\hat{\mathcal{F}}_{\Lambda V}(x)= \sum _{r=1}^{\infty}\MTC_{r+1}(\Lambda V)\cdot x^r$$
is a rational function of the form $\frac{P(x)}{(1-x)^2}$ where $P(x)$ is an integer polynomial satisfying $P(1)=\cat(\Lambda V).$
\end{corollary}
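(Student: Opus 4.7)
The plan is to show that the previous theorem, $\cat(\Lambda V) + \MTC_r(\Lambda V) \leq \MTC_{r+1}(\Lambda V)$, actually holds as an equality in this setting, and then simply to sum the resulting arithmetic progression as a geometric-type series.

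First I would invoke the third axiom from \cite[Lemma 1]{FKS}, which supplies the reverse inequality $\MTC_{r+1}(\Lambda V) \leq \MTC_r(\Lambda V) + \cat(\Lambda V)$ (this is the standard subadditive-type bound for the sequence of modular higher complexities). Combined with the preceding theorem, this yields the exact recursion
\begin{equation*}
\MTC_{r+1}(\Lambda V) = \MTC_r(\Lambda V) + \cat(\Lambda V)
\end{equation*}
for every $r \geq 2$. A trivial induction starting from $r=2$ (where $\MTC_2(\Lambda V) = \MTC(\Lambda V)$) then gives, for all $r \geq 1$,
\begin{equation*}
\MTC_{r+1}(\Lambda V) = \MTC(\Lambda V) + (r-1)\cdot \cat(\Lambda V).
\end{equation*}

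Second, writing $m := \MTC(\Lambda V)$ and $c := \cat(\Lambda V)$ for brevity, I would substitute this closed form into the generating function and split it into two elementary series:
\begin{equation*}
\hat{\mathcal{F}}_{\Lambda V}(x) = m\sum_{r=1}^\infty x^r + c\sum_{r=1}^\infty (r-1)\, x^r = \frac{mx}{1-x} + \frac{cx^2}{(1-x)^2}.
\end{equation*}
Putting everything over the common denominator $(1-x)^2$ gives
\begin{equation*}
\hat{\mathcal{F}}_{\Lambda V}(x) = \frac{mx(1-x) + cx^2}{(1-x)^2} = \frac{mx + (c-m)x^2}{(1-x)^2},
\end{equation*}
so $P(x) = mx + (c-m)x^2$ is an integer polynomial (both $m$ and $c$ are integers), and $P(1) = m + (c-m) = c = \cat(\Lambda V)$, as desired.

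The only real content of the argument is the exact recursion $\MTC_{r+1} = \MTC_r + \cat$; everything afterwards is formal manipulation of a geometric series. The mild subtlety, and the place where I would be careful, is that the theorem provides only the $\geq$ direction, so one must cite the third axiom of FKS to secure the $\leq$ direction in order to collapse the inequality of Corollary \ref{cor15} to an equality in this coformal pure elliptic setting.
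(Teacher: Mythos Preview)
Your proof is correct and follows exactly the approach the paper indicates: combine the theorem's inequality $\cat(\Lambda V)+\MTC_r(\Lambda V)\leq \MTC_{r+1}(\Lambda V)$ with the reverse bound from \cite[Lemma~1]{FKS} to obtain the exact recursion, then read off the generating function. The paper leaves all of this implicit in the single sentence preceding the corollary, and you have simply (and correctly) written out the details, including the explicit form $P(x)=mx+(c-m)x^2$.
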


Now we are ready to prove the result concerning the lower bound of the higher rational topological complexity.
\begin{proof}[Proof of Theorem \ref*{theo.13}]
	Recall, from the begining of this section, the construction of $\Lambda W_{\mathcal{B}}$ and note that this model is also a pure coformal elliptic model. In the proof of theorem \cite[Theorem 5.2]{HRV1}, we have established, for any basis $\mathcal{B}$ of $V^{even}$, that $$cat(\Lambda W_\mathcal{B})+L(\Lambda V, \mathcal{B})\leq \HTC(\Lambda W_\mathcal{B})\leq \MTC(\Lambda W_\mathcal{B}).$$ 
Together with Corollary \ref{cor15}, this implies 
	$$(r-1)\cdot cat(\Lambda W_\mathcal{B})+L(\Lambda V, \mathcal{B})\leq \MTC_r(\Lambda W_\mathcal{B}).$$
	Besides the facts that  $\MTC_r(\Lambda W_\mathcal{B})\leq \MTC_r(\Lambda V)+n\cdot(r-1)$ (see Corollary \ref{cor10}) and $cat(\Lambda W_\mathcal{B})=n+m=cat(\Lambda V)+n$, we obtain
	$$(r-1)\cdot (n+m)+L(\Lambda V, \mathcal{B})\leq \MTC_r(\Lambda W_\mathcal{B})\leq \MTC_r(\Lambda V)+  (r-1)\cdot n,$$
	which consequently implies 
	$$(r-1)\cdot cat(\Lambda V)+L(\Lambda V, \mathcal{B})\leq \MTC_r(\Lambda V)\leq \TC_r(\Lambda V).$$
\end{proof}
\begin{example}
	The homogeneous space $\frac{SU(6)}{SU(3)\times SU(3)}$ has a minimal model of the form $(\Lambda V,d)=(\Lambda(x_1,x_2,y_1,y_2,z ) ,d)$ where $|x_1|=4,$ $|x_2|=6,$ $dx_1=dx_2=0, dy_1=x_1^2, dy_2=x_2^2$ and $dz=x_1x_2$. In \cite{HRV1} we have proved that for this example $L(\Lambda V, \mathcal{B})=2$ for certain basis $\mathcal{B}$ of $V^{even}$. That implies $ 3(r-1)+2=3r-1\leq \TC(\Lambda V)$ (from Theorem \ref{theo.13}) besides to Theorem \ref{th10}, we obtain $\TC_r(\Lambda V)=3r-1$.
\end{example}
\begin{remark} \label{rk16}
	Let $(\Lambda V,d)=(\Lambda (x_1,\cdots,x_n,y_1,\cdots y_m),d)$ be a coformal pure elliptic minimal model. If $n=2$ then we can check that $L(\Lambda V,\mathcal{B})=2$ for any $\mathcal{B}=\{x_1,x_2\}$ a basis of $V^{even}$. In this case we have  $\TC_r(\Lambda V)=r\cdot\cat(\Lambda V)+\chi_{\pi}(\Lambda V)=(r-1)\cdot m+2.$
\end{remark}
\subsection*{Special family}
In this paragraph we are interested in the computation of the higher rational topological complexity of spaces for which the minimal model has the form $(\Lambda W,d)=(\Lambda(x_1,\cdots, x_n,u_1,\cdots ,u_n,y),d)$ where $dx_i=0,$ $du_i=x_i^2$ and $dy\in \Lambda^2(x_1,\cdots, x_n).$
\begin{proposition} \label{pr19}
	For $(\Lambda W,d)=(\Lambda(x_1,\cdots, x_n,u_1,\cdots ,u_n,y),d)$ satisfying $dx_i=0,$ $du_i=x_i^2$ and $dy\in \Lambda^2(x_1,\cdots, x_n)$, we have 
	$$\TC_r(\Lambda W)=r\cdot \cat(\Lambda W)+\chi_{\pi}(\Lambda W)=r\cdot (n+1)-1.$$
\end{proposition}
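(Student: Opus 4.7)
The strategy is to match an upper and a lower bound, both equal to $r(n+1) - 1$.

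\emph{Upper bound.} The model $(\Lambda W, d)$ is pure, elliptic, and coformal since its differential $d$ has constant length $2$. By F\'elix-Halperin, $\cat(\Lambda W) = \dim V^{odd} = n+1$ (coming from the generators $u_1,\dots,u_n,y$), while $\chi_\pi(\Lambda W) = n - (n+1) = -1$. Theorem \ref{th10} then gives $\TC_r(\Lambda W) \le r(n+1) - 1$.

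\emph{Lower bound via Theorem \ref{theo.13}.} The plan is to show $L(\Lambda W, \mathcal{B}) \ge n$ for a suitable basis $\mathcal{B}$ of $V^{even}$, after which Theorem \ref{theo.13} yields $\TC_r(\Lambda W) \ge (r-1)(n+1) + n = r(n+1) - 1$.

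\emph{Constructing $L(\Lambda W, \mathcal{B}) \ge n$.} Work in the extended model $\Lambda W_\mathcal{B} = \Lambda W \otimes \Lambda(u'_1, \ldots, u'_n)$ with $du'_i = x_i^2$. The key cocycle is $\Phi = y \cdot x_1 \cdots x_n - A$, where $A \in \Lambda(x_1,\dots,x_n,u_1,\dots,u_n)$ is chosen so that $dA = dy \cdot x_1 \cdots x_n$; such an $A$ exists because every monomial of $dy \cdot x_1 \cdots x_n$ contains a factor $x_k^2 = du_k$ (since $dy$ is quadratic in the $x_i$'s). The class $[\Phi]$ is non-zero: under the decomposition $H^*(\Lambda W) = H/[dy] H \oplus y \cdot \operatorname{Ann}([dy])$ coming from the Koszul-type spectral sequence of the extension $\Lambda(x,u) \hookrightarrow \Lambda W$ (with $H = \mathbb{Q}[x_1, \ldots, x_n]/(x_i^2)$), $[\Phi]$ represents $y \cdot [x_1 \cdots x_n] \in y \cdot \operatorname{Ann}([dy])$, and $x_1 \cdots x_n \in \operatorname{Ann}([dy])$ because $[dy] \cdot x_1 \cdots x_n = 0$ in $H$. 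Choosing the basis $\mathcal{B}$ so that the cross-terms of $dy$ can be collected around a distinguished generator $x_{i_0}$, one factors $[\Phi]$ cohomologously as a product $[\Phi] = [\Psi] \cdot \prod_{i \ne i_0} [x_i]$, where $\Psi = y x_{i_0} - B_{i_0}$ is a cocycle of bidegree $(1, 1)$ (with $dB_{i_0} = dy \cdot x_{i_0}$, again a coboundary in $\Lambda(x,u)$ by the squaring argument). Each of these $n$ factors lies in $H_{odd, *}(\Lambda W_\mathcal{B})$, so their non-vanishing product gives $L(\Lambda W, \mathcal{B}) \ge n$.

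\emph{Main obstacle.} The hard part is choosing the basis $\mathcal{B}$ so that the cross-terms of $dy$ concentrate around a single $x_{i_0}$. This requires a graded analysis of $dy$ as a bilinear form on $V^{even}$: within each single-degree subspace of $V^{even}$, $dy$ can be diagonalized over $\mathbb{Q}$, while cross-degree pieces must be reduced via a graded singular-value-type decomposition, so that after base change all non-square monomials of $dy$ share the distinguished variable $x_{i_0}$.
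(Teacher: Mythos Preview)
Your upper bound is correct and identical to the paper's.

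The lower bound argument has a genuine gap. The ``concentration'' step---choosing a basis $\mathcal{B}$ so that every non-square monomial of $dy$ involves a distinguished variable $x_{i_0}$---is not achievable in general, and the sketch via diagonalization plus a ``graded SVD'' does not repair it. Take $n=4$ with $|x_1|=2$, $|x_2|=4$, $|x_3|=6$, $|x_4|=8$ (pairwise distinct, so the only graded basis changes are scalings) and $dy = x_1x_4 + x_2x_3$. For any choice of $i_0$ the element $dy\cdot x_{i_0}$ contains a square-free cubic monomial; for instance with $i_0=1$ one gets $x_1x_2x_3$, which represents a nonzero class in $H^*(\Lambda(x,u))\cong \mathbb{Q}[x_1,\dots,x_n]/(x_i^2)$ and hence is \emph{not} a coboundary. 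Thus no $B_{i_0}$ with $dB_{i_0}=dy\cdot x_{i_0}$ exists, $\Psi$ is not a cocycle, and the factorization $[\Phi]=[\Psi]\cdot\prod_{i\neq i_0}[x_i]$ fails. Your proof therefore does not establish $L(\Lambda W,\mathcal{B})\geq n$.

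The paper's route for the lower bound is different and avoids this issue entirely. It does not pass through Theorem~\ref{theo.13} or $L(\Lambda W,\mathcal{B})$; instead it applies Corollary~\ref{cor15},
\[
(r-2)\cdot\cat(\Lambda W)+\MTC(\Lambda W)\leq \MTC_r(\Lambda W),
\]
and then invokes \cite[Proposition 5.11]{HRV1}, which already computes $\MTC(\Lambda W)=2\cat(\Lambda W)+\chi_\pi(\Lambda W)=2(n+1)-1$ for this specific family. Combining these gives $\MTC_r(\Lambda W)\geq r(n+1)-1$ directly. The point is that the hard $r=2$ computation for this family was done in \cite{HRV1}, and Corollary~\ref{cor15} propagates it to all $r$; no new cocycle-factorization is needed.
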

\begin{proof}
From Corollary \ref{cor15}, we have $(r-2)\cdot \cat(\Lambda W)+ \MTC(\Lambda W) \leq \MTC_r(\Lambda W)$. Besides \cite[Proposition 5.11]{HRV1}, ensuring that $\HTC(\Lambda W)=\MTC(\Lambda W)=\TC(\Lambda W)=2\cat(\Lambda W) +\chi_{\pi}(\Lambda W)$. We obtain
\begin{eqnarray*}
	\MTC_r(\Lambda W) &\geq & (r-2)\cdot \cat(\Lambda W)+2\cat(\Lambda W) +\chi_{\pi}(\Lambda W)\\
	&\geq & r \cdot\cat(\Lambda W) +\chi_{\pi}(\Lambda W).
\end{eqnarray*}
The remaining inequality follows from Theorem \ref{th10}.
\end{proof}
\begin{remark}
From the previous Proposition, we can check that $\TC_{r+1}(\Lambda W)=\TC_{r}(\Lambda W)+\cat(\Lambda W)$ (which is also always true for models in Remark \ref{rk16}). Consequently, the generating function $\mathcal{F}_{\Lambda W}(x)= \sum _{r=1}^{\infty}\TC_{r+1}(\Lambda W)\cdot x^r$ is a rational function of the form $P(x)\cdot(1-x)^{-2}$ satisfying $P(1)=\cat(\Lambda W)$ for models in consideration. That represents additional families supporting a positive answer to the question of Farber and Oprea \cite{FO}.
\end{remark}

\section*{Acknowledgements}
The author would like to express a deep gratitude to Professor Lucile Vandembroucq for the valuable guidance. Sincere thanks are also extended to Professor Youssef Rami for his support.


\begin{thebibliography}{}
	\addcontentsline{toc}{chapter}{Bibliography}
	\bibitem{AM}{Amann, M. \emph{Non-formal homogeneous spaces}. Math. Z. 274 (2013), no. 3-4, 1299–1325. 57N65}
\bibitem{C2}{Carrasquel-Vera, J. G. \emph{The rational sectional category of certain maps}. Ann. Sc. Norm. Super. Pisa Cl. Sci. (5) 17 (2017), no. 2, 805–813.}
	\bibitem{C3}{Carrasquel-Vera, J. G. \emph{Computations in rational sectional category}. Bull. Belg. Math. Soc. Simon Stevin 22 (2015), no. 3, 455–469.}
	\bibitem{CKV}{Carrasquel-Vera, J.G., Kahl, T., Vandembroucq, L. \emph{Rational approximations of sectional category and Poincaré duality.} Proc. Amer. Math. Soc. 144 (2016), no. 2, 909–915.}
	\bibitem{FKS}{Farber, M., Kishimoto, D., Stanley, D. \emph{Generating functions and topological complexity. Topology and its Applications}, (2020).}
\bibitem{FO}{Farber, M., Oprea, J. \emph{Higher topological complexity of aspherical spaces}, Topology and its Applications.}
	
	
	\bibitem{FH}{Félix, Y., Halperin, S.\emph{ Rational LS category and its applications.} Transactions of the American mathematical society, (1982).}

	\bibitem{FGKV}{Fernández Suárez, L., Ghienne, P., Kahl, T., Vandembroucq, L.\emph{ Joins of DGA modules and sectional category.} Algebr. Geom. Topol. 6 (2006), 119-144.)}

	\bibitem{HRV}{Hamoun, S., Rami, Y., Vandembroucq, L. \emph{An upper bound for the rational topological complexity of a family of elliptic spaces,} Proceedings of the American Mathematical Society (2024). }	
	\bibitem{HRV1}{Hamoun, S., Rami, Y., Vandembroucq, L. \emph{On the rational topological complexity of coformal elliptic spaces,} 
		J. Pure Appl. Algebra 227 (2023), no. 7.}
	\bibitem{JMP}{Jessup, B., Murillo, A., Parent, P.-E. \emph{Rational topological complexity}. Algebr. Geom. Topol. 12 (2012), no. 3, 1789–1801.}
	\bibitem{KY}{Kishimoto, D., and Yamaguchi, A. \emph{On the growth of topological complexity.} Journal of Applied and Computational Topology 4.4 (2020): 525-532.}	
	\bibitem{LM2}{Lechuga, L., Murillo, A. \emph{A formula for the rational LS-category of certain spaces}. Ann. Inst. Fourier (Grenoble) 52 (2002), no. 5, 1585–1590.}
		\bibitem{YM}{Minowa, Y., \emph{Rational sequential parametrized topological complexity} Arxiv.}	
	\bibitem{RY}{Rudyak, Yuli B., \emph{On higher analogs of topological complexity.} Topology and its Applications 157.5 (2010): 916-920.}
\end{thebibliography}
\end{document}